\documentclass[12pt]{article}

\usepackage[utf8]{inputenc}

\usepackage{a4,amsmath,amsfonts,amsthm,latexsym,amssymb,graphicx}
\usepackage{fullpage}
\usepackage{tocloft}
\usepackage{bm}
\usepackage{enumerate}
\usepackage{enumitem}
\usepackage{bbm}
\usepackage{mathtools}
\usepackage{tikz}
\usepackage{tikz-cd}
\usepackage{pgf}
\usepackage{hhline}
\usepackage{float}
\usepackage{url}
\usepackage[symbol]{footmisc}

\DeclareMathSymbol{\Q}{\mathalpha}{AMSb}{"51}
\DeclareMathSymbol{\R}{\mathalpha}{AMSb}{"52}
\DeclareMathSymbol{\Z}{\mathalpha}{AMSb}{"5A}
\DeclareMathSymbol{\N}{\mathalpha}{AMSb}{"4E}
\DeclareMathSymbol{\C}{\mathalpha}{AMSb}{"43}

\newcommand{\Ann}{\ensuremath{\text{\rm Ann}}}
\newcommand{\Per}{\ensuremath{\text{\rm Per}}}
\newcommand{\ord}{\ensuremath{\text{\rm ord}}}
\newcommand{\supp}{\ensuremath{\text{\rm supp}}}
\newcommand{\Res}{\ensuremath{\text{\rm Res}}}
\newcommand{\conv}{\ensuremath{\text{\rm conv}}}
\newcommand{\fib}[2]{\ensuremath{\text{\rm fib}}_{#1}(#2)}
\newcommand{\A}{\ensuremath{\mathcal{A}}}

\newcommand{\Patt}[2]{\ensuremath{\mathcal{L}_{#2}(#1)}}
\newcommand{\Lang}[1]{\ensuremath{\mathcal{L}(#1)}}

\renewcommand{\vec}[1]{\mathbf{#1}}

\newtheorem*{theorem*}{Theorem}
\newtheorem*{corollary*}{Corollary}

\newtheorem{theorem}{Theorem}
\newtheorem{lemma}[theorem]{Lemma}

\newtheorem*{claim*}{Claim}

\newenvironment{manualtheorem}[1]{%
  \manualtheoreminner
}{\endmanualtheoreminner}

\newtheorem{reformulation}{Reformulation of Theorem}

\theoremstyle{definition}
\newtheorem{definition}[theorem]{Definition}

\newtheorem{example}[theorem]{Example}

\title{On perfect coverings of two-dimensional grids}
\author{Elias Heikkilä, Pyry Herva and Jarkko Kari}
\date{}

\begin{document}

\maketitle

\begin{abstract}
    \noindent
    We study perfect multiple coverings in translation invariant graphs with vertex set $\Z^2$ using an algebraic approach.
    In this approach we consider any such covering as a two-dimensional binary configuration which we then express as a two-variate formal power series.
    Using known results, we conclude that any perfect multiple covering has a non-trivial periodizer, that is, there exists a non-zero polynomial whose formal product with the power series presenting the covering is a two-periodic configuration.
    If a non-trivial periodizer has line polynomial factors in at most one direction, then the configuration is known to
    be periodic.
    Using this result we find many setups where perfect multiple coverings of infinite grids are necessarily periodic.
    We also consider some algorithmic questions on finding perfect multiple coverings.
\end{abstract}

\section{Introduction and preliminaries}

A perfect multiple covering in a graph is a set of vertices, a code, such that
the number of codewords in the neighborhood of an arbitrary vertex depends only on whether the vertex is in the code or not.
In this paper we study these codes on translation invariant graphs with the vertex set $\Z^2$.
We present codes as two-dimensional binary configurations 
and observe that
the perfect covering condition provides an algebraic condition that can be treated with the
algebraic tools developed in~\cite{icalp}. We focus on periodic codes and, in particular, study
setups where all codes are necessarily periodic.
The approach we take was initially mentioned in an example in the survey \cite{surveyjarkko} by the third author, and
considered in the Master's thesis \cite{Heikkila} by the first author.

We start by giving the basic definitions, presenting the aforementioned algebraic approach and stating some past results relevant to us.
In Section 2 we describe an algorithm to find the line polynomial factors of any given (Laurent) polynomial.
In Section 3 we formally define the perfect multiple coverings in graphs and prove some periodicity results concerning them.
  We give new algebraic proofs of some known results concerning perfect multiple coverings on the infinite square grid and on the triangular grid ~\cite{Axenovich,puzynina2}, and provide a new result on the forced periodicity of such coverings on 
  the king grid.
Furthermore, we generalize the definition of perfect coverings for two-dimensional binary configurations with respect to different neighborhoods and covering constants.
In Section 4 we consider some algorithmic questions concerning perfect coverings.
Using a standard argument by H. Wang we show that under certain constraints it is algorithmically decidable to determine whether there exist any perfect coverings with given neighborhood and given covering constants.

\subsection*{Configurations, periodicity, finite patterns and subshifts}


A $d$-dimensional \emph{configuration} is a coloring of the infinite grid $\Z^d$ using finitely many colors, that is, an element of $\A^{\Z^d}$ which we call the $d$-dimensional \emph{configuration space}
where $\A$ is some finite alphabet.
For a configuration $c$ we let $c_{\vec{u}} = c(\vec{u})$ to be the symbol or color that $c$ has in cell $\vec{u}$.
The \emph{translation} $\tau^{\vec{t}}$ by a vector $\vec{t} \in \Z^d$ shifts a configuration $c$ such that $\tau^{\vec{t}}(c)_{\vec{u}} = c_{\vec{u} - \vec{t}}$ for all $\vec{u}\in\Z^d$.
A configuration $c$ is \emph{$\vec{t}$-periodic} if $\tau^{\vec{t}}(c) = c$
and $c$ is \emph{periodic} if $c$ is $\vec{t}$-periodic for some non-zero $\vec{t} \in \Z^d$.
We also say that a configuration $c$ is \emph{periodic in direction} $\vec{v} \in \Z^d\setminus\{\vec{0}\}$ if $c$ is $k \vec{v}$-periodic for some $k \in \Q$.
A $d$-dimensional configuration $c$
is \emph{strongly periodic} if it has $d$
linearly independent vectors of periodicity.
Strongly periodic configurations are then periodic in all directions.
Two-dimensional strongly periodic configurations are called \emph{two-periodic}.


A finite \emph{pattern} is an assignment of symbols on some finite shape $D \subseteq \Z^d$, that is, an element of $\A^D$ where $\A$ is some fixed alphabet.
In particular, the finite patterns in $\A^D$ are called \emph{$D$-patterns}.
Let us denote by $\A^*$ the set of all finite patterns over alphabet $\A$ where the dimension $d$ is known from the context.
A finite pattern $p \in \A^D$ \emph{appears} in a configuration $c \in \A^{\Z^d}$ if $\tau^{\vec{t}}(c)|_D = p$ for some $\vec{t} \in \Z^d$.
A configuration $c$ \emph{contains} the pattern $p$ if it appears in $c$.
For a fixed shape $D$, the set of all $D$-patterns that appear in $c$ is the set $\Patt{c}{D} = \{ \tau^{\vec{t}}(c)|_D \mid \vec{t} \in \Z^d \}$ and the set of all finite patterns in $c$
is denoted by $\Lang{c}$ which we call the \emph{language of $c$}. For a set $\mathcal{S} \subseteq \A^{\Z^d}$ of configurations
we define $\Patt{\mathcal{S}}{D}$ and  $\Lang{\mathcal{S}}$ as the unions of $\Patt{c}{D}$ and $\Lang{c}$ over all 
$c\in \mathcal{S}$, respectively.


Let us review some basic concepts of symbolic dynamics we need. For a reference see \emph{e.g.} \cite{tullio,kurka,lindmarcus}.
The configuration space $\A^{\Z^d}$ can be made a compact topological space by endowing $\A$ with the discrete topology and considering the product topology it induces on $\A^{\Z^d}$ -- the \emph{prodiscrete topology}.
This topology is
induced by a metric where two configurations are close if they agree on a large area around the origin.
Thus $\A^{\Z^d}$ is a compact metric space.

A subset $\mathcal{S} \subseteq \A^{\Z^d}$ of the configuration space is a \emph{subshift} if it is topologically closed and translation-invariant meaning that if $c \in \mathcal{S}$ then for any $\vec{t} \in \Z^d$ also $\tau^{\vec{t}}(c) \in \mathcal{S}$.
Equivalently we can define subshifts using forbidden patterns:
Given a set $F \subseteq \A^*$ of \emph{forbidden} finite patterns, the set
$$
X_F=\{c \in \A^{\Z^d} \mid  \Lang{c} \cap F = \emptyset \}
$$
of configurations that avoid all forbidden patterns
is a subshift, and every subshift is obtained by forbidding some set of finite patterns.
If $F \subseteq \A^*$ is finite then we say that $X_F$ is a \emph{subshift of finite type} (SFT).

The \emph{orbit} of a configuration $c$ is the set $\mathcal{O}(c) = \{ \tau^{\vec{t}}(c) \mid \vec{t} \in \Z^d \}$ of its every translate.
The
\emph{orbit closure} $\overline{\mathcal{O}(c)}$ is the topological closure of its orbit under the prodiscrete topology.
The orbit closure of a configuration $c$ is the smallest subshift that contains $c$. It consists of all configurations $c'$ such that $\Lang{c'}\subseteq \Lang{c}$.

\subsection*{The algebraic approach}

To present a configuration $c \in \A^{\Z^d}$ algebraically we make the assumption that $\A \subseteq \Z$. Then we identify the configuration $c$ with the formal power series
$$
c(X) = \sum_{\vec{u} \in \Z^d} c_{\vec{u}} X^{\vec{u}}
$$
over $d$ variables $x_1, \ldots , x_d$ where we have denoted $X = (x_1, \ldots , x_d)$ and $X^{\vec{u}} = x_1^{u_1} \cdots x_d^{u_d}$ for any $\vec{u} = (u_1,\ldots ,u_d) \in \Z^d$.
For $d=2$ we usually denote $X=(x,y)$.
More generally we study the set of all formal power series over $d$ variables $x_1,\ldots ,x_d$ with complex coefficients which we denote by $\C[[X^{\pm 1}]] = \C[[x_1^{\pm1}, \ldots , x_d^{\pm1}]]$. A power series is \emph{finitary} if it has only finitely many different coefficients and \emph{integral} if its coefficients are all integers.
Thus we identify configurations with finitary and integral power series.

We also use Laurent polynomials which we call from now on simply polynomials.
We use the term ``proper'' when we talk about proper ({\it i.e.}, non-Laurent) polynomials.
Let us denote by $\C[X^{\pm 1}] = \C[x_1^{\pm1}, \ldots , x_d^{\pm1}]$ the set of all (Laurent) polynomials over $d$ variables $x_1,\ldots ,x_d$ with complex coefficients, which is the \emph{Laurent polynomial ring}.
We say that two polynomials have no common factors if all of their common factors are units and that they have a common factor if they have a non--unit common factor.

A product of a polynomial and a power series is well defined.
We say that a polynomial $f=f(X)$ \emph{annihilates} (or is an annihilator of) a power series $c=c(X)$ if $fc=0$, that is, if their product is the zero power series.
We say that a formal power series
$c=c(X)$ is \emph{periodic} if it is annihilated by a \emph{difference polynomial} $X^{\vec{t}}-1$ where $\vec{t}$ is non-zero.
Note that this definition is consistent with the definition of periodicity of configurations defined above. Indeed if $c=c(X)$ is a configuration then multiplying it by a monomial $X^{\vec{t}}$ produces the translated configuration $\tau^{\vec{t}}(c)$ and hence $c$ is $\vec{t}$-periodic if and only if $c = \tau^{\vec{t}}(c) = X^{\vec{t}} c$, which is equivalent to $(X^{\vec{t}}-1)c=0$.
So it is natural to study the \emph{annihilator ideal}
$$
\Ann(c) = \{ f \in \C[X^{\pm 1}] \mid fc = 0 \}
$$
of a power series $c \in \C[[X^{\pm 1}]]$, which indeed is an ideal of the Laurent polynomial ring.
Hence the question whether a configuration (or any formal power series) is periodic is equivalent to asking whether its annihilator ideal contains a difference polynomial.
Another useful ideal that we study is the \emph{periodizer ideal}
$$
\Per(c) = \{ f \in \C[X^{\pm 1}] \mid fc \text{ is strongly periodic} \}.
$$
Note that clearly $\Ann(c)$ is a subset of $\Per(c)$.
Note also that a configuration $c$ has a non-trivial (= non-zero) annihilator if and only if it has a non-trivial periodizer.
The following theorem states that if a configuration has a non-trivial periodizer then it has in fact an annihilator of a particular simple form -- a product of difference polynomials.

\begin{theorem}[\cite{icalp}]
    \label{special annihilator}
    Let $c$ be a configuration in any dimension that has a non-trivial periodizer.
     Then there exist pairwise linearly independent vectors $\vec{t}_1, \ldots ,\vec{t}_m$ with $m \geq 1$ such that
    $$
    (X^{\vec{t}_1}-1) \cdots (X^{\vec{t}_m}-1) \in \Ann(c).
    $$
\end{theorem}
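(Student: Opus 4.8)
The plan is to reduce the statement to a one-dimensional or finitely-generated-module argument and then peel off difference-polynomial factors one at a time. First I would recall the standard dictionary between a periodizer and an annihilator: if $g c$ is two-periodic for some non-zero $g$, say $(x^{\vec{a}}-1)(x^{\vec{b}}-1)$ annihilates $gc$ for linearly independent $\vec{a},\vec{b}$, then $f = g(x^{\vec{a}}-1)(x^{\vec{b}}-1)$ is a non-zero annihilator of $c$. So $\Ann(c)$ is a non-zero ideal, and the real content is to show that a non-zero annihilator ideal of a configuration always contains a product of difference polynomials $(X^{\vec{t}_1}-1)\cdots(X^{\vec{t}_m}-1)$ with pairwise independent $\vec{t}_i$.

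The key step is an induction on the number of variables $d$, or equivalently an induction that successively ``kills'' one direction of non-periodicity at a time. Pick any non-zero $f \in \Ann(c)$. By a change of coordinates (a unimodular substitution $X \mapsto X^M$ for $M \in \mathrm{GL}_d(\Z)$, which preserves the class of difference polynomials up to the linear-independence bookkeeping) I may assume $f$ genuinely involves the last variable $x_d$. Writing $\C[X^{\pm1}] = R[x_d^{\pm1}]$ with $R = \C[x_1^{\pm1},\dots,x_{d-1}^{\pm1}]$ and viewing $f$ as a Laurent polynomial in $x_d$ over the field of fractions, one extracts from $f$ a relation that forces $c$, along each line in the $x_d$-direction, to satisfy a linear recurrence; the point is that the fibers of $c$ become eventually periodic in the $x_d$-direction, uniformly, because $c$ is finitary. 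This is exactly where the finitary hypothesis does the work: a finitary power series annihilated by a non-zero polynomial in one variable is periodic in that variable (a finite-state / pigeonhole argument on the finitely many columns). Hence some difference polynomial $x_d^{k}-1$ already annihilates $c$? No — that is too strong; instead one gets that $(X^{\vec{t}}-1)c$ is a power series still annihilated by a non-zero polynomial but now supported on a lower-dimensional structure, i.e. $(X^{\vec{t}}-1)c$ is again finitary (it is a difference of configurations, so integral and finitary) and lies in a module over a smaller polynomial ring. Apply the induction hypothesis to $(X^{\vec{t}}-1)c$ to obtain $(X^{\vec{t}_1}-1)\cdots(X^{\vec{t}_{m-1}}-1)(X^{\vec{t}}-1)c = 0$, and after discarding repeated or dependent directions (merging $x^{k\vec{t}}-1$ factors using that $x^{k\vec{t}}-1$ is a multiple of $x^{\vec{t}}-1$) one arrives at the claimed form with pairwise independent vectors.

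I expect the main obstacle to be making precise the ``peeling'' step: given a non-zero annihilator $f$ that is not itself a difference polynomial, produce a single difference polynomial $X^{\vec{t}}-1$ such that $(X^{\vec{t}}-1)c$ has a non-zero annihilator strictly ``simpler'' than $f$ in a way that supports the induction. The clean way to organize this is via the structure of the annihilator ideal and its variety: the common zero set $V(\Ann(c)) \subseteq (\C^*)^d$ of a finitary configuration is a finite union of cosets of subgroups of $(\C^*)^d$ of positive dimension (this is the Laurent-polynomial/Laurent-series analogue of the fact underlying the cited ICALP results), and each such coset contributes a difference polynomial vanishing on it. Equivalently, one uses that $\Ann(c)$, being a non-zero ideal whose quotient carries a finitary module, must contain $X^{\vec{t}}-\lambda$ type factors, and the finitary/integral constraint forces $\lambda$ to be a root of unity, hence after raising to a power a genuine difference polynomial. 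Packaging this — via either the Nullstellensatz-style decomposition of $V(\Ann(c))$ or the direct combinatorial recurrence argument — and then bounding the number of factors by the number of distinct directions is the technical heart; everything else is bookkeeping with unimodular coordinate changes and the observation that $\Ann$ is closed under the operations used.
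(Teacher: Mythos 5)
First, a point of order: the paper does not prove this statement at all — it is imported verbatim from the cited reference \cite{icalp} — so there is no in-paper proof to measure your argument against, and your proposal has to stand on its own. Your opening reduction is fine and is the easy half: a non-trivial periodizer $g$ yields a non-trivial annihilator by multiplying $g$ with difference polynomials that kill the strongly periodic configuration $gc$, so the real content is that a finitary power series with \emph{some} non-zero annihilator has an annihilator of the stated product form.

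The core step, however, has a genuine gap. Writing $f=\sum_j g_j(x_1,\dots,x_{d-1})\,x_d^{\,j}$ and using $fc=0$ does \emph{not} give a constant-coefficient linear recurrence along each $x_d$-fiber of $c$: the coefficients $g_j$ act as convolution operators that mix different fibers, so no pigeonhole argument on individual fibers applies. Indeed the conclusion you want at that point is false: take $c=c_1+c_2$ with $c_1(i,j)=a(i)$ and $c_2(i,j)=h(j)$ for non-periodic binary sequences $a,h$; then $(x-1)(y-1)\in\Ann(c)$ and this annihilator genuinely involves $x_d=y$, yet no $y$-fiber of $c$ is even eventually periodic. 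You notice the periodicity conclusion is ``too strong'' and retreat to producing a single $X^{\vec{t}}-1$ making $(X^{\vec{t}}-1)c$ ``simpler'', but you give no mechanism for finding $\vec{t}$ — that is exactly the hard part. Your fallback, that $V(\Ann(c))$ is a finite union of torsion cosets of subtori, is asserted as ``the fact underlying the cited ICALP results'', i.e.\ you are invoking (a form of) the theorem you are meant to prove; the argument in \cite{icalp} obtains this from the non-obvious lemma that $f\in\Ann(c)$ with integer coefficients forces $f(X^p)\in\Ann(c)$ for all sufficiently large primes $p$ (via $f(X^p)\equiv f(X)^p \bmod p$ together with the boundedness of the coefficients of $f(X^p)c$), an ingredient absent from your sketch. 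Finally, even granting the variety statement, the Nullstellensatz only places a \emph{power} of the product of difference polynomials into $\Ann(c)$; removing the multiplicities while keeping the vectors $\vec{t}_1,\dots,\vec{t}_m$ pairwise linearly independent is a further step your proposal does not address.
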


\subsection*{Line polynomials}

The \emph{support} of a power series $c = \sum_{\vec{u}  \in \Z^d} c_{\vec{u}} X^{\vec{u}}$ is the set $\supp(c) = \{ \vec{u} \in \Z^d \mid c_{\vec{u}} \neq 0 \}$.
Thus a polynomial is a power series with a finite support.
A \emph{line polynomial} is a polynomial whose support contains at least two points and the points of the support lie on a unique line.
In other words, a polynomial $f$ is a line polynomial if it is not a monomial
and there exist vectors $\vec{u}, \vec{v} \in \Z^d$ such that $\supp(f) \subseteq \vec{u} + \Q \vec{v}$.
In this case we say that $f$ is a line polynomial in direction $\vec{v}$.
We say that non-zero vectors $\vec{v},\vec{v}'\in\Z^d$ are \emph{parallel} if $\vec{v}'\in \Q \vec{v}$, and
clearly then a line polynomial in direction $\vec{v}$ is also a line polynomial in any parallel direction.
A vector $\vec{v}\in\Z^d$ is \emph{primitive} if its components are pairwise relatively prime. If
$\vec{v}$ is primitive then $\Q \vec{v}\cap\Z^d = \Z \vec{v}$.
For any non-zero $\vec{v}\in\Z^d$ there exists a parallel primitive vector $\vec{v}'\in\Z^d$. It follows that we may assume the vector $\vec{v}$ in the definition of a line polynomial $f$ to be primitive so that $\supp(f) \subseteq \vec{u} + \Z \vec{v}$.
In the following our preferred presentations of directions are in terms of primitive vectors.


Any line polynomial
$\phi$ in a (primitive) direction $\vec{v}$ can be written uniquely in the form
$$
\phi =
X^{\vec{u}}(a_0 + a_1 X^{\vec{v}} + \ldots + a_n X^{n \vec{v}}) = X^{\vec{u}}(a_0 + a_1 t + \ldots + a_n t^n)
$$
where $\vec{u} \in \Z^d, n \geq 1 , a_0 \neq 0 , a_n \neq 0$ and $t = X^{\vec{v}}$.
Let us call
the single variable proper polynomial $a_0 + a_1 t + \ldots + a_n t^n \in \C[t]$ the \emph{normal form} of $\phi$.
Moreover, for a monomial $a X^{\vec{u}}$ we define its normal form to be $a$.
Thus two line polynomials in the direction $\vec{v}$
have the same normal form if and only if
they are the same polynomial up to multiplication by $X^{\vec{u}}$, for some $\vec{u}\in\Z^d$.

Difference polynomials are line polynomials and hence the annihilator provided by Theorem \ref{special annihilator} is a product of line polynomials.
Annihilation by a difference polynomial means periodicity. More generally,
annihilation of a configuration $c$ by a line polynomial in a primitive direction $\vec{v}$
can be understood as the annihilation of the one-dimensional \emph{$\vec{v}$-fibers} $\sum_{k \in \Z} c_{\vec{u} + k \vec{v}} X^{\vec{u}+k\vec{v}}$ of $c$ in
direction $\vec{v}$, and since annihilation in the one-dimensional setting implies periodicity we conclude that a configuration is periodic if and only if it is annihilated by a line polynomial.
It is known that if $c$ has a periodizer with line polynomial factors in at most one direction then $c$ is periodic:

\begin{theorem}[\cite{fullproofs}] \label{theorem on line polynomial factors}
    Let $c$ be a two-dimensional configuration and $f \in \Per(c)$. Then the following conditions hold.
    \begin{itemize}
        \item If $f$ does not have any line polynomial factors then $c$ is two-periodic.
        \item If all line polynomial factors of $f$ are in the same direction then $c$ is periodic in this direction.
    \end{itemize}
\end{theorem}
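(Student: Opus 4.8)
The plan is to obtain the result from Theorem~\ref{special annihilator} together with one structural input from~\cite{fullproofs}: for a two-dimensional configuration $c$ with a non-trivial periodizer, the ideal $\Per(c)$ is closed under taking greatest common divisors (in fact it is principal, generated by a product of line polynomials in pairwise linearly independent directions, but closure under $\gcd$ is all that is needed below). Note first that we may assume $f\neq 0$: the zero polynomial is divisible by a line polynomial in every direction, so for $f=0$ both bulleted hypotheses fail and there is nothing to prove. With $0\neq f\in\Per(c)$, the configuration $c$ has a non-trivial periodizer.

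First I would invoke Theorem~\ref{special annihilator} to get $G=(X^{\vec{t}_1}-1)\cdots(X^{\vec{t}_m}-1)\in\Ann(c)\subseteq\Per(c)$ with $m\geq 1$ and the $\vec{t}_i$ pairwise linearly independent. Writing $\vec{t}_i=k_i\vec{v}_i$ with $\vec{v}_i$ primitive and expanding $X^{k\vec{v}}-1=\prod_{\zeta^{k}=1}(X^{\vec{v}}-\zeta)$, I see that $G$ is a product of line polynomials whose directions are among the still pairwise linearly independent primitive vectors $\vec{v}_1,\dots,\vec{v}_m$. I will then use two elementary facts: a factor of a line polynomial in direction $\vec{v}$ is a monomial or a line polynomial in direction $\vec{v}$; and a line polynomial has no two line polynomial factors in linearly independent directions (multiplication adds Newton polygons, and the Minkowski sum of two non-parallel segments is two-dimensional). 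Together with unique factorisation in $\C[x^{\pm 1},y^{\pm 1}]$ these imply that every divisor of $G$ is a monomial times a product of line polynomials in directions among $\vec{v}_1,\dots,\vec{v}_m$, and that each individual line polynomial factor of such a divisor lies in one of the directions $\vec{v}_i$.

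Now put $h=\gcd(f,G)$; closure of $\Per(c)$ under $\gcd$ gives $h\in\Per(c)$, so $hc$ is two-periodic. Since $h\mid G$, the previous paragraph applies to $h$; since $h\mid f$, every line polynomial factor of $h$ is a line polynomial factor of $f$. If $f$ has no line polynomial factor, then neither does $h$, so $h$ is a monomial, hence a unit, so $1\in\Per(c)$ and $c=1\cdot c$ is strongly periodic, i.e. two-periodic. If instead all line polynomial factors of $f$ lie in one direction $\vec{v}$, then all line polynomial factors of $h$ lie in direction $\vec{v}$; but these directions are among the pairwise linearly independent $\vec{v}_i$, so at most one $\vec{v}_i$ is parallel to $\vec{v}$, and accordingly $h$ is a monomial times a line polynomial in direction $\vec{v}$, or just a monomial. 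In the monomial case $c$ is two-periodic and hence periodic in direction $\vec{v}$. Otherwise $h=X^{\vec{u}}\psi$ with $\psi$ a line polynomial in direction $\vec{v}$, so $\psi c=X^{-\vec{u}}hc$ is two-periodic and in particular periodic in direction $\vec{v}$: $(X^{k\vec{v}}-1)\psi c=0$ for some integer $k\geq 1$. As $(X^{k\vec{v}}-1)\psi$ is a product of two line polynomials in the common direction $\vec{v}$ it is itself a line polynomial in direction $\vec{v}$, and it annihilates $c$; since a configuration annihilated by a line polynomial in a primitive direction $\vec{v}$ is periodic in direction $\vec{v}$ (it has only finitely many distinct $\vec{v}$-fibers, each satisfying a fixed one-dimensional linear recurrence and hence periodic), $c$ is periodic in direction $\vec{v}$.

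The main obstacle is the structural input, the closure of $\Per(c)$ under $\gcd$ (equivalently, the principality of $\Per(c)$), which I would take from~\cite{fullproofs}; this is where the real work lies. Its proof passes to the principal ideal domain $\C(x)[y^{\pm 1}]$, transfers coprimality from $\C[x^{\pm 1},y^{\pm 1}]$ via Gauss's lemma, and crucially exploits that the alphabet is finite through the one-dimensional fact that a finitary integral power series with a non-zero annihilator is periodic. Everything downstream of that is the unique-factorisation bookkeeping sketched above, plus the elementary one-dimensional statement that a line polynomial annihilator forces periodicity in its direction.
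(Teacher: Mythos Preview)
Your argument is correct and follows essentially the same route as the paper's main proof sketch: both rely on the structural fact from~\cite{fullproofs} that $\Per(c)$ is principal, generated by a product of line polynomials in pairwise non-parallel directions, and then deduce the two bullets by comparing this generator (or, in your version, $h=\gcd(f,G)$, which plays the same role) with the line polynomial factors of $f$. Your detour through Theorem~\ref{special annihilator} to manufacture an explicit product-of-line-polynomials element $G$ and then take $h=\gcd(f,G)$ is a minor variation; the paper instead cites the structure of the generator directly, but the downstream reasoning (unit generator $\Rightarrow$ two-periodic; generator a line polynomial in direction $\vec{v}$ $\Rightarrow$ multiply by $X^{k\vec{v}}-1$ to get a line polynomial annihilator) is identical.

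It is worth noting that the paper also offers an alternative proof in the Appendix that avoids the principality of $\Per(c)$ altogether: starting from the special annihilator $G$ of Theorem~\ref{special annihilator}, it uses resultants $\Res_x(f,G)$ and $\Res_y(f,G)$ to produce one-variable periodizers directly, exploiting only that $f$ and $G$ have no common factor (or, in the one-direction case, dividing out their common line polynomial factor first). That route trades the structural result you invoke for a more hands-on elimination argument.
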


\noindent
\emph{Proof sketch.}
The periodizer ideal $\Per(c)$ is a principal ideal generated by a polynomial $g = \phi_1 \cdots \phi_m$  where $\phi_1, \ldots ,\phi_m$ are line polynomials in pairwise non-parallel directions~\cite{fullproofs}.
Because $f\in\Per(c)$ we know that $g$ divides $f$.
If $f$ does not have any line polynomial factors then $g = 1$ and thus $c = gc$ is two-periodic.
 If $f$ has line polynomial factors and they are in the same primitive
direction $\vec{v}$ then $g$ is a line polynomial in this direction.
Since $gc$ is two-periodic it is annihilated by $(X^{k \vec{v}} - 1)$ for some $k \in \Z$.
Then the configuration $c$ is annihilated by the line polynomial $(X^{k \vec{v}} - 1)g$ 
in direction $\vec{v}$.
We conclude that $c$ is periodic in direction $\vec{v}$.
\qed

\medskip

\noindent
(See the Appendix for an alternative proof that mimics the usage of resultants in~\cite{karimoutot},
instead of relying on the structure of the ideal $\Per(c)$.)

\section{Line polynomial factors}

The open and closed \emph{discrete half planes} determined by a non-zero vector $\vec{v} \in \Z^2$ are the sets $H_{\vec{v}} = \{ \vec{u} \in \Z^2 \mid \langle \vec{u} , \vec{v} ^{\perp} \rangle > 0 \}$ and $\overline{H}_{\vec{v}} = \{ \vec{u} \in \Z^2 \mid \langle \vec{u} , \vec{v} ^{\perp} \rangle \geq 0 \}$, respectively, where $\vec{v}^{\perp} = (v_2,-v_1)$ is orthogonal to $\vec{v} = (v_1,v_2)$.
Let us also denote by $l_{\vec{v}} = \overline{H}_{\vec{v}} \setminus H_{\vec{v}}$ the discrete line parallel to $\vec{v}$ that goes through the origin.
In other words, the half plane determined by $\vec{v}$ is the half plane ``to the right'' of the line $l_{\vec{v}}$ when moving along the line in the direction of $\vec{v}$.
We say that a finite set $D \subseteq \Z^2$ has an \emph{outer edge} in direction $\vec{v}$ if there exists a vector $\vec{t} \in \Z^2$ such that $D \subseteq \overline{H}_{\vec{v}} + \vec{t}$ and $|D \cap (l_{\vec{v}} + \vec{t})| \geq 2$.
We then call $D \cap (l_{\vec{v}} + \vec{t})$ an outer edge of $D$ in direction $\vec{v}$.
An outer edge corresponding to $\vec{v}$ means that the convex hull of $D$  has an edge in direction $\vec{v}$
in the clockwise orientation around $D$.

If a finite non-empty set $D$ does not have an outer edge in direction $\vec{v}$ then there exists a vector $\vec{t} \in \Z^2$ such that $D \subseteq \overline{H}_{\vec{v}} + \vec{t}$ and $|D \cap (l_{\vec{v}} + \vec{t})| = 1$ and then we say that $D$ has a vertex in direction $\vec{v}$ and we call $D \cap (l_{\vec{v}} + \vec{t})$ a vertex of $D$ in direction $\vec{v}$.
We say that a polynomial $f$ has an outer edge or a vertex in direction $\vec{v}$ if its support has an outer edge or a vertex in direction $\vec{v}$, respectively.
Note that every finite shape $D$ has either an edge or a vertex in any non-zero direction. Note also that in this context
directions $\vec{v}$ and $-\vec{v}$ are not the same: a shape may have an outer edge in direction $\vec{v}$
but no outer edge in direction $-\vec{v}$.
The following lemma shows that a polynomial can have line polynomial factors only in the directions of its outer edges.

\begin{lemma}[\cite{karimoutot}]
\label{lemma1}
    Let $f$ be a non-zero polynomial with a line polynomial factor in direction $\vec{v}$. Then $f$ has outer edges in directions $\vec{v}$ and $-\vec{v}$.
\end{lemma}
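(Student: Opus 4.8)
The plan is to factor $f = \phi g$ where $\phi$ is a line polynomial in direction $\vec{v}$ and $g$ is a non-zero polynomial, and then track what happens to the supports when we look along direction $\vec{v}$ and along $-\vec{v}$. The key observation is that a product of two line polynomials in the same direction $\vec{v}$ is again a line polynomial in direction $\vec{v}$ (their normal forms multiply as single-variable polynomials, and the product of two non-zero single-variable polynomials is non-zero), so in particular $\phi$ times the ``edge polynomial'' of $g$ on a given side is non-zero.

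First I would fix a primitive vector $\vec{v}$ and recall that every finite non-empty shape has either an outer edge or a vertex in direction $\vec{v}$, and likewise in direction $-\vec{v}$; I want to rule out the vertex case for $f$. Translating $g$ so that $\supp(g) \subseteq \overline{H}_{\vec{v}} + \vec{t}$ with $\supp(g) \cap (l_{\vec{v}} + \vec{t})$ equal to the face (edge or vertex) of $g$ in direction $\vec{v}$, write $g = g_1 + g_0$ where $\supp(g_1)$ is that face and $\supp(g_0) \subseteq H_{\vec{v}} + \vec{t}$. Since $\phi$ is a line polynomial in direction $\vec{v}$, after a translation $\supp(\phi) \subseteq \overline{H}_{\vec{v}} + \vec{s}$ with $\supp(\phi) \cap (l_{\vec{v}} + \vec{s}) = \supp(\phi)$ (the whole of $\phi$ lies on the boundary line). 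Then $\supp(f) = \supp(\phi g) \subseteq \overline{H}_{\vec{v}} + \vec{s} + \vec{t}$, and modulo $H_{\vec{v}} + \vec{s} + \vec{t}$ the polynomial $f$ reduces to $\phi g_1$. Now $\phi g_1$ is a product of two polynomials both supported on a single line parallel to $\vec{v}$, hence (being non-zero, as the product of non-zero single-variable polynomials in $t = X^{\vec{v}}$ is non-zero) it is either a monomial or a line polynomial in direction $\vec{v}$; since $\phi$ is not a monomial, $\phi g_1$ is a genuine line polynomial in direction $\vec{v}$. Therefore $\supp(f) \cap (l_{\vec{v}} + \vec{s} + \vec{t}) = \supp(\phi g_1)$ has at least two points, so $f$ has an outer edge in direction $\vec{v}$.

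The argument for $-\vec{v}$ is identical after replacing $\vec{v}$ by $-\vec{v}$: a line polynomial in direction $\vec{v}$ is also a line polynomial in direction $-\vec{v}$ (the line $\Q\vec{v}$ is the same), so $\phi$ again lies entirely on a single line orthogonal-side of $\overline{H}_{-\vec{v}}$, $g$ contributes a non-monomial-or-monomial face on that side, and the product is again a line polynomial in direction $-\vec{v}$, giving $f$ an outer edge in direction $-\vec{v}$.

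I expect the only subtle point to be the bookkeeping with half-planes and translations — making sure that ``reducing modulo $H_{\vec{v}}$'' really isolates exactly the face polynomial $\phi g_1$ and that its support has at least two points. The essential content, that the leading face (in any direction) of a product is the product of the leading faces, is exactly the one-direction analogue of the reasoning in the commented-out lemma preceding this statement and in \cite{karimoutot}, so I would either cite that fact or spell out the two-line computation $\supp(gh - \phi_1\phi_2) \subseteq H_{\vec{v}} + \vec{t}_1 + \vec{t}_2$ with $\phi_1 = \phi$, $\phi_2 = g_1$. Everything else is routine.
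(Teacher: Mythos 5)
Your proof is correct: factoring $f=\phi(g_1+g_0)$ with $g_1$ the face of $g$ on the boundary line, observing that the part of $f$ supported on $l_{\vec{v}}+\vec{s}+\vec{t}$ is exactly $\phi g_1$, and noting that the normal form of $\phi g_1$ is the product of the normal forms (hence non-zero with non-vanishing lowest and highest terms whose degrees differ by at least $\deg$ of the normal form of $\phi\geq 1$, so its support has at least two points) yields the outer edge in direction $\vec{v}$, and symmetrically in $-\vec{v}$. This is essentially the same leading-face argument as in the cited reference (and the proof sketched in the source); the paper additionally remarks that the lemma also follows from Theorem~\ref{theorem1}, since a vertex in direction $\vec{v}$ or $-\vec{v}$ would produce a non-zero monomial $\vec{v}$-fiber and thus prevent the fibers from having a common factor.
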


\noindent

Let $\vec{v} \in \Z^2 \setminus \{ \vec{0} \}$ be any non-zero primitive vector
and let $f = \sum f_{\vec{u}} X^{\vec{u}}$ be a polynomial.
Recall that a \emph{$\vec{v}$-fiber} of $f$ is a polynomial
of the form $\sum_{k \in \Z} f_{\vec{u} + k \vec{v}} X^{\vec{u} + k \vec{v}}$ for some $\vec{u}\in\Z^2$.
Thus a non-zero $\vec{v}$-fiber of a polynomial is either a line polynomial or a monomial.
Let us denote by $\mathcal{F}_{\vec{v}}(f)$ the set of different normal forms of all non-zero
$\vec{v}$-fibers of a polynomial $f$, which is thus a finite set.
The following simple example illustrates the concept of fibers and their normal forms.

\begin{figure}
	\centering
	\begin{tikzpicture}[scale=0.6]
		\draw[fill=red] (1,0) circle(3pt);
		\node at (1,0.3) {\tiny $3x$};
		
		\draw[fill=blue] (0,1) circle(3pt);
		\node at (0,1.3) {\tiny $y$};
		
		\draw[fill=blue] (1,2) circle(3pt);
		\node at (1,2.35) {\tiny $xy^2$};
		
		\draw[fill=green] (1,1) circle(3pt);
		\node at (0.9,1.4) {\tiny $xy$};
		
		\draw[fill=green] (3,3) circle(3pt);
		\node at (2.9,3.4) {\tiny $x^3y^3$};
		
		\draw[fill=green] (4,4) circle(3pt);
		\node at (3.9,4.4) {\tiny $x^4y^4$};
		
		\draw[] (-0.5,0.5) -- (1.5,2.5);
		
		\draw[] (0.5,0.5) -- (4.5,4.5);
		
		\draw[] (0.5,-0.5) -- (1.5,0.5);
	\end{tikzpicture}
	\caption{The support of $f = 3x + y + xy^2 +xy + x^3y^3 + x^4y^4$ and its different $(1,1)$-fibers.}
	\label{havainnollistus}
\end{figure}
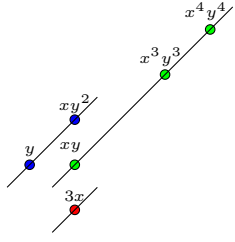

\begin{example}
    Let us determine the set $\mathcal{F}_{\vec{v}}(f)$ for $f = f(X) = f(x,y) = 3x + y + xy^2 +xy + x^3y^3 + x^4y^4$
    and $\vec{v} = (1,1)$.
    By grouping the terms we can write
    $$
    f = 3x + y(1 + xy) + xy(1 + x^2y^2 + x^3y^3)
    = X^{(1,0)} \cdot 3 + X^{(0,1)}(1 + t) + X^{(1,1)}(1 + t^2 + t^3)
    $$
    where $t = X^{(1,1)} = xy$.
    Hence
    $\mathcal{F}_{\vec{v}}(f) =
    \{ 3, 1 + t, 1 + t^2 + t^3 \}$.
    See Figure \ref{havainnollistus} for a pictorial illustration.
    \qed
\end{example}




\noindent
As noticed in the example above, polynomials are linear combinations of their fibers: for any polynomial $f$ and any non-zero primitive vector $\vec{v}$ we can write
$$
f = X^{\vec{u}_1} \psi_1 + \ldots + X^{\vec{u}_n} \psi_n
$$
for some $\vec{u}_1, \ldots , \vec{u}_n \in \Z^2$ where $\psi_1, \ldots , \psi_n\in \mathcal{F}_{\vec{v}}(f)$.
We use this in the proof of the next theorem.

\begin{theorem} \label{theorem1}
	A polynomial $f$ has a line polynomial factor in direction $\vec{v}$ if and only if the polynomials in $\mathcal{F}_{\vec{v}}(f)$ have a common factor.
\end{theorem}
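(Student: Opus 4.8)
The plan is to prove both implications by writing $f$ as a $\Z$-linear combination (with monomial coefficients) of its $\vec v$-fibers, $f = X^{\vec u_1}\psi_1 + \dots + X^{\vec u_n}\psi_n$ with $\psi_i \in \mathcal F_{\vec v}(f)$, and tracking how a line polynomial factor in direction $\vec v$ interacts with this decomposition. Fix a primitive $\vec v$ and note that every $\vec v$-fiber of $f$ lives on a single coset of $\Z\vec v$, and distinct fibers live on distinct cosets; so the decomposition above is essentially a grouping of the monomials of $f$ by their $\Z\vec v$-coset. The key algebraic fact is that multiplication by a line polynomial $\phi$ in direction $\vec v$ (with normal form a single-variable polynomial $p(t)$, $t = X^{\vec v}$) acts coset-by-coset: if $g = \sum_i X^{\vec u_i}\chi_i$ is the fiber decomposition of $g$ then $\phi g = \sum_i X^{\vec u_i}(p \cdot \chi_i)$ up to regrouping, because $\phi$ does not mix cosets.

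For the ``if'' direction, suppose the normal forms in $\mathcal F_{\vec v}(f) = \{\psi_1,\dots,\psi_n\}$ share a common non-unit factor $p(t) \in \C[t]$. Write each $\psi_i = p(t)\,q_i(t)$, let $\phi = p(X^{\vec v})$ be the corresponding line polynomial in direction $\vec v$, and set $g = \sum_{i} X^{\vec u_i} q_i(X^{\vec v})$ where the $X^{\vec u_i}$ are the monomial shifts from the fiber decomposition of $f$. Then $\phi g = \sum_i X^{\vec u_i}\bigl(p(X^{\vec v})q_i(X^{\vec v})\bigr) = \sum_i X^{\vec u_i}\psi_i(X^{\vec v}) = f$, so $\phi$ is a line polynomial factor of $f$ in direction $\vec v$. (One must check $g$ is a genuine nonzero Laurent polynomial, which is immediate since each $q_i \neq 0$.)

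For the ``only if'' direction, suppose $f = \phi g$ with $\phi$ a line polynomial in direction $\vec v$ and normal form $p(t)$; we may take $g \neq 0$. Decompose $g$ into its $\vec v$-fibers, $g = \sum_j X^{\vec w_j}\chi_j$ with $\chi_j \in \mathcal F_{\vec v}(g)$. Since $\phi$ respects $\Z\vec v$-cosets, the fiber decomposition of $f = \phi g$ is obtained by replacing each $\chi_j$ with $p\cdot\chi_j$ and regrouping those that land on the same coset; concretely, every normal form in $\mathcal F_{\vec v}(f)$ is a $\C$-linear combination (via monomial/scalar coefficients that are powers of $t$) of polynomials of the form $p(t)\chi_j(t)$, hence is divisible by $p(t)$. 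Therefore $p(t)$ is a common factor of all elements of $\mathcal F_{\vec v}(f)$, and it is a non-unit because $\phi$ is a line polynomial (so $\deg p \geq 1$).

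The main obstacle is the bookkeeping in the ``only if'' direction: when $\phi g$ is formed, two distinct fibers of $g$ on cosets that differ by a multiple of $\vec v$ may, after multiplication by $\phi$, contribute to the same fiber of $f$, so a normal form of $f$ need not be of the clean shape $p\cdot(\text{normal form of }g)$ but only a combination $\sum_k t^{a_k}\,p(t)\chi_{j_k}(t)$. Making precise that every such combination is still divisible by $p(t)$ — and in particular that the lowest-degree term survives so the result is a genuine polynomial of the right normal form — is the one place care is needed; it follows because $p(t)$ factors out of the whole sum, and one then divides by the appropriate power of $t$ to land back in normal form. I would also remark that the direction $-\vec v$ is covered by the same argument (fibers and normal forms are insensitive to the sign of $\vec v$), consistent with Lemma~\ref{lemma1}.
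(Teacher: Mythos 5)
Your proof is correct and follows essentially the same route as the paper: both directions rest on decomposing $f$ into its $\vec{v}$-fibers and on the fact that multiplication by a line polynomial in direction $\vec{v}$ acts coset-by-coset. The only comment worth adding is that the ``bookkeeping'' you flag in the only-if direction does not actually arise: distinct fibers of $g$ lie on distinct cosets of $\Z\vec{v}$, and multiplying by $\phi$ translates cosets bijectively, so each non-zero fiber of $\phi g$ is exactly $\phi$ times a single fiber of $g$, with no regrouping or cancellation possible.
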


\begin{proof}
For any line polynomial $\phi$ in direction $\vec{v}$, and for any polynomial $g$, the $\vec{v}$-fibers of the
product $\phi g$ have a common factor $\phi$.
In other words, if a polynomial $f$ has a line polynomial factor $\phi$ in direction $\vec{v}$ then the polynomials in $\mathcal{F}_{\vec{v}}(f)$ have the normal form of $\phi$ as a common factor.

    For the converse direction, assume that
    the polynomials in $\mathcal{F}_{\vec{v}}(f)$ have a common factor $\phi$ which is thus a line polynomial in direction $\vec{v}$.
    Then there exist vectors $\vec{u}_1, \ldots , \vec{u}_n \in \Z^2$ and polynomials $\phi\psi_1, \ldots , \phi\psi_n \in\mathcal{F}_{\vec{v}}(f)$ such that
    $$
    f = X^{\vec{u}_1} \phi \psi_1 + \ldots + X^{\vec{u}_n} \phi \psi_n.
    $$
    Hence
    $\phi$ is a line polynomial factor of $f$ in direction $\vec{v}$.
\end{proof}
Note that Lemma~\ref{lemma1} actually follows immediately from Theorem~\ref{theorem1}: A vertex instead of an outer edge
in direction $\vec{v}$ or $-\vec{v}$ provides a non-zero monomial $\vec{v}$-fiber, which implies that 
the polynomials in $\mathcal{F}_{\vec{v}}(f)$ have no common factors.

Thus to find out the line polynomial factors of $f$ we first
need to find out the possible directions of the line polynomials, that is, the directions of the (finitely many)
outer edges of $f$, and then we need to check for which of these possible directions $\vec{v}$ the polynomials
in $\mathcal{F}_{\vec{v}}(f)$ have a common factor.
There are clearly algorithms to find the outer edges of a given polynomial
and to determine whether finitely many line polynomials have a common factor.
If such a factor exists then $f$ has a line polynomial factor in this direction by Theorem \ref{theorem1}.
Thus we have proved the following theorem.

\begin{theorem}
    There is an algorithm to find the line polynomial factors of a given (Laurent) polynomial.
\end{theorem}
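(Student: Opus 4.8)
The plan is to assemble the algorithm from three decidable subroutines and invoke Theorem~\ref{theorem1} to certify correctness. Given a Laurent polynomial $f = \sum_{\vec{u}} f_{\vec{u}} X^{\vec{u}}$, the first step is to compute its support $\supp(f)$ (a finite set, read off directly from $f$) and then enumerate the directions of its outer edges. By Lemma~\ref{lemma1}, any line polynomial factor of $f$ must lie in a direction $\vec{v}$ such that $f$ has an outer edge in both $\vec{v}$ and $-\vec{v}$; since the outer edges correspond to the edges of the convex hull $\conv(\supp(f))$, there are only finitely many candidate directions, and they are obtained by a standard two-dimensional convex-hull computation followed by reducing each edge vector to a primitive vector. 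This yields a finite list $\vec{v}_1, \ldots, \vec{v}_r$ of candidate primitive directions.

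The second step is, for each candidate direction $\vec{v}_i$, to compute the finite set $\mathcal{F}_{\vec{v}_i}(f)$ of normal forms of the non-zero $\vec{v}_i$-fibers. This is a purely mechanical grouping of the monomials of $f$ into $\vec{v}_i$-fibers: two monomials $X^{\vec{u}}$ and $X^{\vec{u}'}$ lie in the same fiber iff $\vec{u} - \vec{u}' \in \Z\vec{v}_i$, and within each fiber one records the single-variable polynomial in $t = X^{\vec{v}_i}$ obtained by the normalization described in the Line polynomials subsection. The third step is to decide, for each $i$, whether the polynomials in $\mathcal{F}_{\vec{v}_i}(f)$ (viewed as elements of $\C[t]$) have a non-unit common factor; this is just a gcd computation in the univariate polynomial ring $\C[t]$, which is effective. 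If $\gcd(\mathcal{F}_{\vec{v}_i}(f))$ is non-constant, say equal to $d_i(t)$, then by Theorem~\ref{theorem1} the polynomial $f$ has a line polynomial factor in direction $\vec{v}_i$, and a concrete such factor is $X^{\vec{u}} d_i(X^{\vec{v}_i})$ for an appropriate shift $\vec{u}$; if $\gcd$ is a unit, Theorem~\ref{theorem1} guarantees $f$ has no line polynomial factor in direction $\vec{v}_i$. Running over all $i$ thus produces the complete list of directions admitting line polynomial factors, together with the normal form of the ``maximal'' line polynomial factor in each such direction, from which all line polynomial factors (which are exactly the non-unit divisors of these in each direction, up to monomial multiples) can be enumerated.

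Correctness is then immediate: Lemma~\ref{lemma1} ensures we have not missed any direction, and Theorem~\ref{theorem1} is precisely the equivalence that makes the gcd test both necessary and sufficient in each candidate direction. I do not anticipate a genuine obstacle here, since every ingredient is a classical effective procedure (convex hull in the plane, primitive-vector reduction via gcd of integer coordinates, and univariate polynomial gcd over $\C$); the only point requiring a little care is the arithmetic over $\C$, but since $f$ is given by finitely many explicit coefficients one works in the finitely generated extension of $\Q$ they generate, where gcd computations are effective. The remaining bookkeeping---translating a univariate gcd $d_i(t) \in \C[t]$ back into a Laurent polynomial $X^{\vec{u}} d_i(X^{\vec{v}_i})$ dividing $f$---is routine and follows the normal-form dictionary already set up in the excerpt.
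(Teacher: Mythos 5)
Your proposal is correct and follows essentially the same route as the paper: restrict the candidate directions to the finitely many outer-edge directions via Lemma~\ref{lemma1}, compute the fiber normal forms in each such direction, and test for a non-unit common factor, with Theorem~\ref{theorem1} certifying that this test is both necessary and sufficient. The paper's argument is just a terser version of this; your added remarks on effective gcd computation over the subfield of $\C$ generated by the coefficients and on recovering the actual factors from the gcd are sensible elaborations rather than deviations.
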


\section{Perfect coverings}

In this paper a $graph$ is a tuple $G=(V,E)$ where $V$ is the (possibly infinite) 
vertex set of $G$ and $E \subseteq \{ \{ u,v \} \mid u,v \in V, u \neq v \}$ is the edge set of $G$.
Thus the graphs we consider are \emph{simple} and \emph{undirected}. We also assume that all vertices have only finitely many neighbors in the graph.
For a graph $G=(V,E)$ we call any subset $S \subseteq V$ of the vertex set a \emph{code} in $G$.
The distance $d(u,v)$ of two vertices $u,v \in V$ is the length of a shortest path between them.
The \emph{(closed) $r$-neighborhood} of a vertex $u \in V$ is the set $N_r(u) = \{ v \in V \mid d(v,u) \leq r \}$, that is, the ball of radius $r$ centered at $u$.
Let us now give the definition of the family of codes we consider.

\begin{definition} \label{maar1}
    Let $G=(V,E)$ be a graph.
    A code $S \subseteq V$ is an \emph{$(r,b,a)$-covering}
    in $G$ for non-negative integers $b$ and $a$ if the $r$-neighborhood of every vertex in $S$ contains exactly $b$ elements of $S$ and the $r$-neighborhood of every vertex not in $S$ contains exactly $a$ elements of $S$, that is, if for every $u \in V$
    $$
    |N_r(u) \cap S| =
    \begin{cases}
        b \text{ if } u \in S \\
        a \text{ if } u \not \in S
    \end{cases}
    .
    $$
\end{definition}

\noindent
By a \emph{perfect (multiple) covering} we mean any $(r,b,a)$-covering.


\subsection{Infinite grids}

\begin{figure}
	\centering
	\begin{tikzpicture}[scale=0.6]
		\draw[fill=black] (0,0) circle(3pt);
		\draw (-1,0) circle(3pt);
		\draw (0,-1) circle(3pt);
		\draw (1,0) circle(3pt);
		\draw (0,1) circle(3pt);
		
		\draw[densely dotted] (0,1.4) -- (1.4,0) -- (0,-1.4) -- (-1.4,0) -- (0,1.4);
		\node[scale=0.7] at (0,-2) {(a) The square grid};
		
		\draw[fill=black] (5,0) circle(3pt);
		\draw (4,0) circle(3pt);
		\draw (5,-1) circle(3pt);
		\draw (6,0) circle(3pt);
		\draw (5,1) circle(3pt);	
		\draw (6,1) circle(3pt);	
		\draw (6,-1) circle(3pt);
		\draw (4,1) circle(3pt);
		\draw (4,-1) circle(3pt);
		
		\draw[densely dotted] (6.4,1.4) -- (6.4,-1.4) -- (3.6,-1.4) -- (3.6,1.4) -- (6.4,1.4);
		\node[scale=0.7] at (5,-2) {(b) The king grid};
		
		\draw[fill=black] (10,0) circle(3pt);
		\draw (9,0) circle(3pt);
		\draw (10,-1) circle(3pt);
		\draw (11,0) circle(3pt);
		\draw (10,1) circle(3pt);	
		\draw (11,1) circle(3pt);	
		\draw (9,-1) circle(3pt);
		
		\draw[densely dotted] (11.4,1.4) -- (11.4,0) -- (10,-1.4) -- (8.6,-1.4) -- (8.6,0) -- (10,1.4) -- (11.4,1.4);
		\node[scale=0.7] at (10,-2) {(c) The triangular grid};
	\end{tikzpicture}
	\caption{The $1$-neighborhoods of the black vertex in (a) the square grid, (b) the king grid, and (c) the triangular grid.}
    \label{Neighborhoods}
\end{figure}
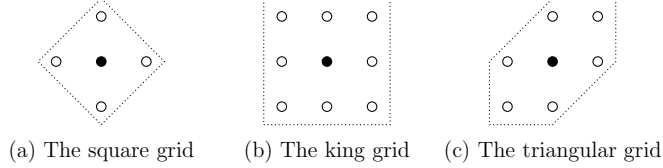

\noindent
An \emph{infinite grid} is a translation invariant 
graph with the vertex set $\Z^2$.
In other words, in infinite grids we have $N_r(\vec{u}) = \vec{u} + N_r(\vec{0})$ for all $\vec{u} \in \Z^2$.
The \emph{square grid} is the graph
$
(\Z^2, E_{\mathcal{S}})$ with
$E_{\mathcal{S}} = \{ \{ \vec{u} , \vec{v} \} \mid \vec{u} - \vec{v} \in \{  (\pm 1,0), (0,\pm 1) \} \}$, the \emph{king grid} is the graph
$
(\Z^2, E_{\mathcal{K}})$ with
$E_{\mathcal{K}} = \{ \{ \vec{u}, \vec{v} \} \mid \vec{u} - \vec{v} \in \{ (\pm 1,0),(0,\pm 1),(\pm 1,\pm 1) \}  \}$ and
the \emph{triangular grid} is the graph $
(\Z^2, E_{\mathcal{T}})$ with
$
E_{\mathcal{T}} = \{ \{ \vec{u}, \vec{v} \} \mid \vec{u} - \vec{v} \in \{ (\pm 1,0),(0,\pm 1),(1,1),(-1,-1) \}  \}.
$
See Figure \ref{Neighborhoods} for the 1-neighborhoods of a vertex in these graphs.
A code $S \subseteq \Z^2$ is periodic if $S = S + \vec{t}$ for some non-zero $\vec{t} \in \Z^2$.
It is two-periodic if $S = S + \vec{t}_1$ and $S = S + \vec{t}_2$ where $\vec{t}_1$ and $\vec{t}_2$ are linearly independent.
The following result is by Axenovich.

\begin{theorem}[\cite{Axenovich}] \label{squaregrid1}
    If $b-a \neq 1$ then any $(1,b,a)$-covering in the square grid is two-periodic.
\end{theorem}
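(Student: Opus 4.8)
\medskip
\noindent\emph{Proof plan.}
I would encode the covering condition as an algebraic identity, extract from it two periodizers of the code pointing in the two coordinate directions, and apply Theorem~\ref{theorem on line polynomial factors} once for each. Identify the code $S$ with the configuration $c\in\{0,1\}^{\Z^2}$ for which $c_{\vec u}=1$ exactly when $\vec u\in S$, viewed as a power series, and let $N=N(x,y)=1+x+x^{-1}+y+y^{-1}$ be the neighbourhood polynomial of the square grid. Since this neighbourhood is symmetric, $(Nc)_{\vec u}=|N_1(\vec u)\cap S|$ for every $\vec u$, so the defining property of a $(1,b,a)$-covering becomes $(Nc)_{\vec u}=a+(b-a)c_{\vec u}$ for all $\vec u$, that is,
\begin{equation*}
\bigl(N-(b-a)\bigr)\,c \;=\; a\,\mathbbm{1},
\end{equation*}
where $\mathbbm{1}=\sum_{\vec u\in\Z^2}X^{\vec u}$ and $N-(b-a)$ abbreviates $N-(b-a)X^{\vec 0}$. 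As $(x-1)\mathbbm{1}=(y-1)\mathbbm{1}=0$, multiplying by $x-1$ and by $y-1$ and setting $f_x=(x-1)\bigl(N-(b-a)\bigr)$, $f_y=(y-1)\bigl(N-(b-a)\bigr)$ gives $f_x c=0$ and $f_y c=0$; both polynomials are non-zero, so $c$ has non-trivial annihilators and in particular $f_x,f_y\in\Per(c)$.

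The key step is to show that $N-(b-a)$ has no line polynomial factors, and this is the only place where $b-a\neq 1$ is used. Because $b-a\neq 1$, the constant coefficient $1-(b-a)$ is non-zero, so $\supp\bigl(N-(b-a)\bigr)=\{\vec 0,(\pm 1,0),(0,\pm 1)\}$; its convex hull is a diamond whose outer edges occur only in the diagonal directions, so by Lemma~\ref{lemma1} a line polynomial factor of $N-(b-a)$ could exist only in direction $(1,1)$ or $(1,-1)$. But for each of these two directions $\vec v$, the $\vec v$-fiber of $N-(b-a)$ through the origin is the monomial $(1-(b-a))X^{\vec 0}$; hence $\mathcal{F}_{\vec v}\bigl(N-(b-a)\bigr)$ contains a unit, its elements have no common factor, and so by Theorem~\ref{theorem1} there is no line polynomial factor in direction $\vec v$. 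Thus $N-(b-a)$ has no line polynomial factors at all.

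It remains to locate the line polynomial factors of $f_x$ and $f_y$. A non-monomial polynomial is a line polynomial in at most one direction, and any non-monomial factor of a line polynomial is again a line polynomial in the parallel direction; hence if $\phi$ were a line polynomial factor of $f_x=(x-1)\bigl(N-(b-a)\bigr)$ in a direction not parallel to $(1,0)$, then $\phi$ and $x-1$ would share no non-monomial factor, and unique factorisation in $\C[X^{\pm 1}]$ would force $\phi\mid N-(b-a)$, contradicting the previous paragraph. So every line polynomial factor of $f_x$ is in direction $(1,0)$, and symmetrically every line polynomial factor of $f_y$ is in direction $(0,1)$. Theorem~\ref{theorem on line polynomial factors} applied to $f_x\in\Per(c)$ then gives that $c$ is periodic in direction $(1,0)$, and applied to $f_y$ that $c$ is periodic in direction $(0,1)$; these are linearly independent directions of periodicity, so $c$, and hence $S$, is two-periodic.

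I expect the main obstacle to be exactly that the ``obvious'' single periodizer $(x-1)(y-1)\bigl(N-(b-a)\bigr)$ has line polynomial factors in the two distinct directions $(1,0)$ and $(0,1)$, so Theorem~\ref{theorem on line polynomial factors} cannot be applied to it directly; one is forced to peel off $x-1$ and $y-1$ one at a time and control the line factors of each product separately, which is what makes the factorisation analysis of $N-(b-a)$ unavoidable. That analysis is also where the hypothesis is sharp: for $b-a=1$ one has $N-1=x^{-1}y^{-1}(x+y)(xy+1)$, so $N-1$ has line polynomial factors in the two non-parallel directions $(1,-1)$ and $(1,1)$, the argument collapses, and indeed the conclusion fails in that case.
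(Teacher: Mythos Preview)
Your proof is correct, and the heart of it---showing that $g=N-(b-a)$ has no line polynomial factors by checking that the $\vec v$-fiber through the origin is the nonzero constant $1-(b-a)$---is exactly the paper's argument.

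However, you take an unnecessary detour. You manufacture the \emph{annihilators} $f_x=(x-1)g$ and $f_y=(y-1)g$, then spend a paragraph on unique factorisation in $\C[X^{\pm 1}]$ to locate their line polynomial factors, and finally invoke the second bullet of Theorem~\ref{theorem on line polynomial factors} twice. The paper instead observes that $g$ itself already lies in $\Per(c)$: the identity $gc=a\mathbbm{1}$ exhibits $gc$ as a constant configuration, which is two-periodic, so $g\in\Per(c)$ by definition. Since $g$ has no line polynomial factors, the \emph{first} bullet of Theorem~\ref{theorem on line polynomial factors} gives two-periodicity of $c$ immediately. Your closing remark that ``the obvious single periodizer $(x-1)(y-1)g$ has line factors in two directions'' reveals the oversight: you were thinking of annihilators rather than periodizers, but Theorem~\ref{theorem on line polynomial factors} is stated for $\Per(c)$, and $g$ is already there. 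The factorisation analysis of $f_x,f_y$ and the two separate applications are correct but superfluous.
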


\noindent
A code $S \subseteq \Z^2$ in any infinite grid can be presented as a configuration $c \in \{0,1\}^{\Z^2}$ which is defined such that $c_{\vec{u}} = 1$ if $\vec{u} \in S$ and $c_{\vec{u}} = 0$ if $\vec{u} \not \in S$.
The positioning of the codewords in the $r$-neighborhood of any vertex $\vec{u} \in \Z^2$
is then presented as a finite pattern $c|_{\vec{u} + N_r(\vec{0})}$.

\begin{definition} \label{maar2}
    A configuration $c \in \{ 0,1 \}^{\Z^2}$ is a \emph{$(D,b,a)$-covering}
    for a finite shape $D \subseteq \Z^2$ (the neighborhood) and non-negative integers $b$ and $a$ (the covering constants) if for all $\vec{u} \in \Z^2$ the pattern $c|_{\vec{u} + D}$ contains exactly $b$ symbols 1 if $c_{\vec{u}} = 1$ and exactly $a$ symbols 1 if $c_{\vec{u}} = 0$.
\end{definition}

\noindent
We call also any $(D,b,a)$-covering perfect and hence a perfect covering is either a code in a graph or a two-dimensional binary configuration. 

Definitions \ref{maar1} and \ref{maar2} are consistent in infinite grids:
a code $S$ in an infinite grid $G$
is an $(r,b,a)$-covering if and only if the configuration $c \in \{ 0,1 \}^{\Z^2}$ presenting $S$ is a $(D,b,a)$-covering where $D$ is the $r$-neighborhood of $\vec{0}$ in $G$.
For a set $D \subseteq \Z^2$ we define its \emph{characteristic polynomial} to be $f_D(X) = \sum _{\vec{u} \in D} X^{-\vec{u}}$.
Let us denote by $\mathbbm{1}(X)$
the constant power series $\sum_{\vec{u} \in \Z^2} X^{\vec{u}}$.
If $c$ is a $(D,b,a)$-covering then from the definition we get
that
$
f_D(X)c(X) = (b-a)c(X) + a \mathbbm{1}(X)
$
which is equivalent
to
$\left (f_D(X) - (b-a) \right )c(X) = a \mathbbm{1}(X)$.
Thus if $c$ is a $(D,b,a)$-covering then $f_D(X) - (b-a) \in \Per(c)$.
Using our formulation we get a simple proof for Theorem \ref{squaregrid1}:

\setcounter{reformulation}{7}
\begin{reformulation}
    Let $D$ be the 1-neighborhood of $\vec{0}$ in the square grid and assume that $b-a \neq 1$.
    Then every $(D,b,a)$-covering is two-periodic.
\end{reformulation}

\begin{proof}
	Let $c$ be an arbitrary $(D,b,a)$-covering.
    We show that $g = f_D - (b-a) = x^{-1} + y^{-1} + 1 - (b-a) + x + y  \in \Per(c)$ has no line polynomial factors. Then $c$ is two-periodic by Theorem \ref{theorem on line polynomial factors}.
    The outer edges of $g$ are in directions $(1,1),(-1,-1),(1,-1)$ and $(-1,1)$ and hence
    by Lemma~\ref{lemma1}
    any line polynomial factor of $g$ is either in direction $(1,1)$ or $(1,-1)$.
    For $\vec{v} \in \{(1,1),(1,-1)\}$
    we have $\mathcal{F}_{\vec{v}}(g) = \{ 1+t,1 -(b-a) \}$.
    See Figure \ref{Illustrations} for an illustration. Since $1 -(b-a)$ is a non-trivial monomial,
    by Theorem \ref{theorem1} the periodizer $g \in \Per(c)$ has no line polynomial factors.
\end{proof}

\noindent
The following result was already proved in a more general form in \cite{puzynina2}.
We give a short proof using our algebraic approach.

\begin{theorem}[\cite{puzynina2}] \label{squaregrid2}
    Let $r \geq 2$ and let $D$ be the $r$-neighborhood of $\vec{0}$ in the square grid.
    Then every $(D,b,a)$-covering is two-periodic.
    In other words, all $(r,b,a)$-coverings in the square grid are two-periodic for all $r \geq 2$.
\end{theorem}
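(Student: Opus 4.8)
The plan is to mirror the proof of the reformulated Theorem~\ref{squaregrid1} above. Let $c$ be an arbitrary $(D,b,a)$-covering, so that $g = f_D - (b-a) \in \Per(c)$; it suffices to show that $g$ has no line polynomial factors, for then $c$ is two-periodic by Theorem~\ref{theorem on line polynomial factors}. Here $D = \{\vec u \in \Z^2 \mid |u_1| + |u_2| \le r\}$, the diamond of Manhattan radius $r$, since the graph distance in the square grid is the Manhattan distance. One has $\supp(g) \subseteq D$ and $\supp(g)$ contains the corners $(\pm r,0)$ and $(0,\pm r)$, so the convex hull of $\supp(g)$ is the square with those four corners; hence the only outer-edge directions of $g$ are $\pm(1,1)$ and $\pm(1,-1)$, and by Lemma~\ref{lemma1} it is enough to rule out line polynomial factors of $g$ in the two directions $(1,1)$ and $(1,-1)$.

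Next I would analyse the $(1,1)$-fibers of $D$; the case of direction $(1,-1)$ is identical since $D$ is invariant under $(u_1,u_2)\mapsto(u_1,-u_2)$. A $(1,1)$-fiber is determined by the value $w = u_1 - u_2 \in \{-r,\dots,r\}$, and using the identity $|u_1|+|u_2| = \max(|u_1+u_2|,\,|u_1-u_2|)$ one checks that the points of $D$ on this fiber form a block of consecutive lattice points in direction $(1,1)$, of cardinality $r+1$ when $w \equiv r \pmod 2$ and of cardinality $r$ otherwise. Writing $t = X^{(1,1)}$, it follows that every nonzero $(1,1)$-fiber of $f_D$ has normal form $1 + t + \dots + t^{r}$ or $1 + t + \dots + t^{r-1}$. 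Replacing $f_D$ by $g = f_D - (b-a)X^{\vec 0}$ alters only the fiber through the origin, namely the one indexed by $w = 0$; every other fiber of $g$ equals the corresponding fiber of $f_D$.

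Finally I would use the hypothesis $r \ge 2$: the indices $w = 1$ and $w = 2$ are nonzero, satisfy $|w| \le r$, and have opposite parities, so among the two associated fibers one has normal form $1 + t + \dots + t^{r}$ and the other $1 + t + \dots + t^{r-1}$; since neither of these fibers meets the origin, both normal forms belong to $\mathcal{F}_{(1,1)}(g)$. But $1 + t + \dots + t^{r}$ and $1 + t + \dots + t^{r-1}$ are coprime: any common root would be a common root of $t^{r+1}-1$ and $t^{r}-1$, hence a root of $t^{\gcd(r+1,r)}-1 = t-1$, yet $t = 1$ is a root of neither (these polynomials take the values $r+1$ and $r$ at $t=1$). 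Thus $\mathcal{F}_{(1,1)}(g)$ has no common factor, so by Theorem~\ref{theorem1} $g$ has no line polynomial factor in direction $(1,1)$, and likewise none in direction $(1,-1)$. Hence $g$ has no line polynomial factors at all, and $c$ is two-periodic.

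I do not expect a genuine obstacle here; the only subtle point is locating where $r \ge 2$ enters. It is exactly what guarantees two nonzero fiber indices of opposite parity — equivalently, that both possible fiber normal forms already occur among fibers disjoint from the origin — so that the modification of the origin fiber by the constant $b-a$ plays no role whatsoever. For $r = 1$ the two nonzero $(1,1)$-fibers are both $2$-element blocks with normal form $1+t$, one must genuinely use the origin fiber, and that fiber contributes the constant $1-(b-a)$ to $\mathcal{F}_{(1,1)}(g)$, which is a nonzero monomial (hence harmless) only when $b-a \neq 1$ — precisely the extra hypothesis appearing in Theorem~\ref{squaregrid1}.
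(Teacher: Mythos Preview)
Your proof is correct and follows essentially the same approach as the paper's: identify the outer-edge directions of $g=f_D-(b-a)$ as $\pm(1,1)$ and $\pm(1,-1)$, exhibit the two normal forms $1+t+\cdots+t^{r}$ and $1+t+\cdots+t^{r-1}$ in $\mathcal{F}_{\vec v}(g)$, and conclude via Theorem~\ref{theorem1} and Theorem~\ref{theorem on line polynomial factors}. The only cosmetic difference is the coprimality argument---the paper simply notes $\phi_1-\phi_2=t^r$, whereas you argue via common roots of $t^{r+1}-1$ and $t^r-1$---and your write-up makes explicit the role of $r\ge 2$ (ensuring two off-origin fibers of opposite parity), which the paper leaves implicit.
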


\begin{proof}
    Let $c$ be an arbitrary $(D,b,a)$-covering.
    Again, by Theorem \ref{theorem on line polynomial factors}, it is enough to
    show that $g = f_D -(b-a) \in \Per(c)$ has no line polynomial factors.
    By Lemma~\ref{lemma1} any line polynomial factor of $g$
    has direction
    $(1,1)$ or $(1,-1)$.
    So assume that $\vec{v} \in \{ (1,1), (1,-1) \}$.
    We have $\phi_1=1+t+\ldots+t^r \in \mathcal{F}_{\vec{v}}(g)$ and
    $\phi_2=1+t+\ldots+t^{r-1} \in \mathcal{F}_{\vec{v}}(g)$.
    See Figure \ref{Illustrations} for an illustration in the case $r=2$. Since $\phi_1-\phi_2=t^r$, the polynomials
    $\phi_1$ and $\phi_2$ have no common factors, and hence by Theorem \ref{theorem1} the periodizer
    $g$ has no line polynomial factors.
\end{proof}

\begin{figure}
	\centering
	\begin{tikzpicture}[scale=0.5]
		\draw[fill=red] (0,0) circle(3pt);
		\draw[fill=blue] (-1,0) circle(3pt);
		\draw[] (0,-1) circle(3pt);
		\draw[] (1,0) circle(3pt);
		\draw[fill=blue] (0,1) circle(3pt);
		
		\draw (-1.5,-0.5) -- (0.5,1.5);
		\draw (-0.5,-0.5) -- (0.5,0.5);
		\node[rotate=45] at (0.4,1.7) {\tiny $1+t$};	
		\node[rotate=45] at (0.4,0.8) {\tiny $1-(b-a)$};	
		
		

	
		\draw[] (6,0) circle(3pt);
		\draw[fill=red] (5,0) circle(3pt);
		\draw[] (6,-1) circle(3pt);
		\draw[] (7,0) circle(3pt);
		\draw[fill=red] (6,1) circle(3pt);
		\draw[fill=blue] (6,2) circle(3pt);
		\draw[] (6,-2) circle(3pt);
		\draw[] (7,1) circle(3pt);
		\draw[] (7,-1) circle(3pt);
		\draw[fill=blue] (5,1) circle(3pt);
		\draw[] (5,-1) circle(3pt);
		\draw[fill=blue] (4,0) circle(3pt);
		\draw[] (8,0) circle(3pt);
		
		\draw (3.5,-0.5) -- (6.5,2.5);
		\draw (4.5,-0.5) -- (6.5,1.5);
		
		\node[rotate=45] at (5,1.4) {\tiny $1+t+t^2$};
		\node[rotate=45] at (6.5,1.8) {\tiny $1+t$};

		
		
		\draw[fill=red] (13,0) circle(3pt);
		\draw[fill=red] (12,0) circle(3pt);
		\draw[] (13,-1) circle(3pt);
		\draw[fill=red] (14,0) circle(3pt);
		\draw[] (13,1) circle(3pt);
		\draw[fill=blue] (13,2) circle(3pt);
		\draw[] (13,-2) circle(3pt);
		\draw[] (14,1) circle(3pt);
		\draw[] (14,-1) circle(3pt);
		\draw[] (12,1) circle(3pt);
		\draw[] (12,-1) circle(3pt);
		\draw[fill=red] (11,0) circle(3pt);
		\draw[fill=red] (15,0) circle(3pt);
		\draw[] (15,1) circle(3pt);
		\draw[fill=blue] (15,2) circle(3pt);
		\draw[] (15,-1) circle(3pt);
		\draw[] (15,-2) circle(3pt);
		\draw[] (11,1) circle(3pt);
		\draw[fill=blue] (11,2) circle(3pt);
		\draw[] (11,-1) circle(3pt);
		\draw[] (11,-2) circle(3pt);
		\draw[fill=blue] (12,2) circle(3pt);
		\draw[] (12,-2) circle(3pt);
		\draw[fill=blue] (14,2) circle(3pt);
		\draw[] (14,-2) circle(3pt);
		
		\draw (10.5,2) -- (15.5,2);
		\draw (10.5,0) -- (15.5,0);
		\node at (14,2.5) {\tiny $1+t+t^2+t^3+t^4$};
		\node at (14,0.5) {\tiny $1+t+(1-(b-a))t^2+t^3+t^4$};

	\end{tikzpicture}
	\caption{The constellation on the left illustrates the proof of Theorem \ref{squaregrid1}, the constellation on the center illustrates the proof of Theorem \ref{squaregrid2} with $r=2$ and the constellation on the right illustrates the proof of Theorem \ref{kinggrid} with $r=2$.}
    \label{Illustrations}
\end{figure}
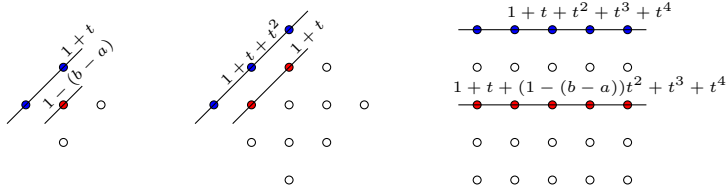

\noindent
If $a \neq b$ then for all $r \geq 1$ any $(r,b,a)$-covering in the king grid is two-periodic:

\begin{theorem} \label{kinggrid}
    Let $r \geq 1$ be arbitrary and let $D$ be the $r$-neighborhood of $\vec{0}$ in the king grid and assume that $a \neq b$.
    Then any $(D,b,a)$-covering is two-periodic.
    In other words, all $(r,b,a)$-coverings in the king grid are two-periodic whenever $a \neq b$.
\end{theorem}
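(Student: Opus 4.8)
The plan is to follow the template already used for Theorems~\ref{squaregrid1} and~\ref{squaregrid2}. Let $c$ be an arbitrary $(D,b,a)$-covering, so that $g = f_D - (b-a) \in \Per(c)$; by Theorem~\ref{theorem on line polynomial factors} it suffices to show that $g$ has no line polynomial factors. First I would record that the $r$-neighborhood of $\vec{0}$ in the king grid is the full square $D = \{-r,\dots,r\}^2$, and since $D = -D$ we have $f_D(x,y) = \sum_{i=-r}^{r}\sum_{j=-r}^{r} x^i y^j$. Hence $\supp(g) \subseteq D$, with every coefficient equal to $1$ except at the origin, where it equals $1-(b-a)$. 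The convex hull of $\supp(g)$ is this square -- all of its boundary cells belong to $\supp(g)$ because $r \geq 1$, even in the degenerate case $1-(b-a)=0$ -- so the only outer edge directions of $g$ are $(1,0)$ and $(0,1)$. By Lemma~\ref{lemma1}, any line polynomial factor of $g$ is therefore in direction $(1,0)$ or $(0,1)$.

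By the symmetry exchanging $x$ and $y$ it is enough to rule out direction $\vec{v} = (1,0)$. The $\vec{v}$-fibers of $g$ are its rows: the $2r$ rows not passing through the origin each have normal form $\psi_1 = 1 + t + \dots + t^{2r}$, while the central row has normal form $\psi_2 = 1 + t + \dots + t^{r-1} + (1-(b-a))t^{r} + t^{r+1} + \dots + t^{2r}$, which is a bona fide line polynomial of degree $2r$ with non-zero constant term since $r \geq 1$. Both $\psi_1$ and $\psi_2$ lie in $\mathcal{F}_{\vec{v}}(g)$ (see Figure~\ref{Illustrations} for the case $r=2$), and $\psi_1 - \psi_2 = (b-a)t^{r}$. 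The hypothesis $a \neq b$ makes this a non-zero monomial, so any common factor of $\psi_1$ and $\psi_2$ divides $t^r$; but $t \nmid \psi_1$, so $\psi_1$ and $\psi_2$ have no common factor. Thus $\mathcal{F}_{\vec{v}}(g)$ has no common factor, and by Theorem~\ref{theorem1} $g$ has no line polynomial factor in direction $(1,0)$, and by symmetry none in direction $(0,1)$. Consequently $g$ has no line polynomial factors at all, and $c$ is two-periodic by Theorem~\ref{theorem on line polynomial factors}.

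I do not anticipate a real obstacle: the argument is a direct analogue of the square-grid proofs. The factorization $f_D = \bigl(\sum_i x^i\bigr)\bigl(\sum_j y^j\bigr)$ is a trap rather than a help, since it exhibits line polynomial factors of $f_D$, not of $g$; the point is precisely that subtracting the constant $(b-a)$ destroys them. The only steps needing a little care are (i) checking that this subtraction creates no new outer-edge directions, which is immediate once one notes that every boundary cell of the square survives in $\supp(g)$ because $r \geq 1$, and (ii) verifying that $\psi_1$ and $\psi_2$ really are distinct, legitimate normal forms in $\mathcal{F}_{\vec{v}}(g)$, which again uses $r \geq 1$ together with $a \neq b$.
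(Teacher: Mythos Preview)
Your proposal is correct and is essentially the paper's own proof: the paper likewise shows $g=f_D-(b-a)$ has outer edges only in the horizontal and vertical directions, picks the two fibers with normal forms $1+t+\dots+t^{2r}$ and $1+\dots+(1-(b-a))t^r+\dots+t^{2r}$, and uses that their difference $(b-a)t^r$ is a non-zero monomial to conclude via Theorem~\ref{theorem1}. Your write-up merely adds a bit more justification (the explicit description of $D$, the symmetry reduction, and the check that the boundary of the square survives in $\supp(g)$).
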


\begin{proof}
    Let $c$ be an arbitrary $(D,b,a)$-covering.
    By Theorem \ref{theorem on line polynomial factors} it is sufficient to show that $g = f_D -(b-a)$ has no line polynomial factors.
    The outer edges of $g$
    are in directions $(1,0),(-1,0),(0,1)$ and $(0,-1)$.
    Hence
    by Lemma~\ref{lemma1} any line polynomial factor of $g$
    has direction
    $(1,0)$
    or
    $(0,1)$.
    Let $\vec{v} \in \{ (1,0),(0,1)\}$.
    We have
    $\phi_1 = 1+t+\ldots + t^{r-1} + (1-(b-a))t^r +t^{r+1} + \ldots + t^{2r} \in \mathcal{F}_{\vec{v}}(g)$
    and
    $\phi_2 = 1+t+ \ldots + t^{2r} \in \mathcal{F}_{\vec{v}}(g)$.
    See Figure \ref{Illustrations} for an illustration with $r=2$.
    Since $\phi_2-\phi_1=(b-a)t^r$ is a non-trivial monomial, $\phi_1$ and $\phi_2$ have no common factors.
    Thus $g$ has no line polynomial factors by Theorem \ref{theorem1}.
\end{proof}

\noindent
Similarly as in the square grid we can give simple proofs for known results from \cite{puzynina2} concerning forced periodicity in the triangular grid:

\begin{theorem}[\cite{puzynina2}] \label{triangulargrid1}
    Let $D$ be the 1-neighborhood of $\vec{0}$ in the triangular grid and assume that
    $b-a \neq -1$.
    Then every $(D,b,a)$-covering in the triangular grid is two-periodic.
    In other words, all $(1,b,a)$-coverings in the triangular grid are two-periodic whenever $b-a \neq -1$.
\end{theorem}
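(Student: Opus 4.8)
\emph{Proof proposal.}
The plan is to follow the template of the proofs of Theorems~\ref{squaregrid1}, \ref{squaregrid2} and~\ref{kinggrid}. Let $c$ be an arbitrary $(D,b,a)$-covering, where $D=\{\vec 0,\ (1,0),\ (-1,0),\ (0,1),\ (0,-1),\ (1,1),\ (-1,-1)\}$ is the $1$-neighborhood of $\vec 0$ in the triangular grid. Since $D$ is centrally symmetric, $f_D = 1 + x + x^{-1} + y + y^{-1} + xy + x^{-1}y^{-1}$, and hence
$$
g \;=\; f_D - (b-a) \;=\; \bigl(1-(b-a)\bigr) + x + x^{-1} + y + y^{-1} + xy + x^{-1}y^{-1} \;\in\; \Per(c).
$$
By Theorem~\ref{theorem on line polynomial factors} it suffices to show that $g$ has no line polynomial factors; then $c$ is two-periodic.

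First I would pin down the possible directions of a line polynomial factor. The support of $g$ is the centrally symmetric hexagon with vertices $(\pm1,0)$, $(0,\pm1)$ and $\pm(1,1)$, whose outer edges point in the directions $\pm(1,0)$, $\pm(0,1)$ and $\pm(1,1)$. So by Lemma~\ref{lemma1} every line polynomial factor of $g$ is in direction $(1,0)$, $(0,1)$ or $(1,1)$.

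Next, for each primitive direction $\vec v\in\{(1,0),(0,1),(1,1)\}$ I would compute $\mathcal F_{\vec v}(g)$ by partitioning the seven terms of $g$ into $\vec v$-fibers. In each of the three cases the picture is the same: the four support points not lying on the line $\Q\vec v$ form two $\vec v$-fibers, each a pair of adjacent points and hence with normal form $1+t$, while the fiber supported on $\Q\vec v$ (namely $\vec 0$ together with the two support points collinear with it in direction $\vec v$) has normal form $1 + \bigl(1-(b-a)\bigr)t + t^2$. Thus $\mathcal F_{\vec v}(g) = \{\, 1+t,\ 1+(1-(b-a))t+t^2 \,\}$ for all three $\vec v$.

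Finally, since $1+t$ is irreducible, the two elements of $\mathcal F_{\vec v}(g)$ have a common factor if and only if $1+t$ divides $1+(1-(b-a))t+t^2$, i.e.\ if and only if $t=-1$ is a root of the quadratic; but its value at $t=-1$ is $1-(1-(b-a))+1=(b-a)+1$, which is non-zero exactly when $b-a\neq-1$. Hence, under the hypothesis, $\mathcal F_{\vec v}(g)$ has no common factor, so by Theorem~\ref{theorem1} $g$ has no line polynomial factor in direction $\vec v$. As this covers every candidate direction, $g$ has no line polynomial factors and $c$ is two-periodic. The only genuine computation is the fiber bookkeeping; I expect the diagonal direction $(1,1)$ to be the most error-prone step — two support points share a $(1,1)$-fiber precisely when the differences of their coordinates agree — but it is routine and mirrors the square-grid pictures in Figure~\ref{Illustrations}.
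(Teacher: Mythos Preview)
Your proposal is correct and follows essentially the same route as the paper's proof: same periodizer $g$, same outer-edge directions, same fiber sets $\mathcal F_{\vec v}(g)=\{1+t,\ 1+(1-(b-a))t+t^2\}$, and the same appeal to Theorems~\ref{theorem1} and~\ref{theorem on line polynomial factors}. The only cosmetic difference is how the coprimality is checked --- you evaluate the quadratic at $t=-1$, whereas the paper computes $\phi_1^2-\phi_2=(1+b-a)t$ --- and these are equivalent one-line verifications.
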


\begin{theorem}[\cite{puzynina2}] \label{triangulargrid2}
    Let $r \geq 2$ and let $D$ be the $r$-neighborhood of $\vec{0}$ in the triangular grid.
    Then every $(D,b,a)$-covering is two-periodic.
    In other words, all $(r,b,a)$-coverings in the triangular grid are two-periodic for $r \geq 2$.
\end{theorem}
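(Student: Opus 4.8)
\emph{Proof proposal.} The plan is to run the same argument as in Theorems~\ref{squaregrid2} and~\ref{kinggrid}. Let $c$ be an arbitrary $(D,b,a)$-covering, so that $g = f_D - (b-a) \in \Per(c)$; by Theorem~\ref{theorem on line polynomial factors} it suffices to show that $g$ has no line polynomial factors.

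First I would pin down the shape $D = N_r(\vec{0})$. A short computation with the generators $(\pm 1,0)$, $(0,\pm 1)$, $\pm(1,1)$ of the triangular grid shows that the graph distance from $\vec{0}$ to $(x,y)$ equals $\max(|x|,|y|,|x-y|)$, and hence $D = \{(x,y) \in \Z^2 \mid |x| \leq r,\ |y| \leq r,\ |x-y| \leq r \}$, a centrally symmetric hexagon whose edges lie in the directions $(1,0)$, $(0,1)$, $(1,1)$ and their opposites. Since the constant $-(b-a)$ changes only the coefficient of the origin, and the origin is interior to $D$, the polynomial $g$ has the same outer edges as $D$, so by Lemma~\ref{lemma1} any line polynomial factor of $g$ is in one of the primitive directions $(1,0)$, $(0,1)$, $(1,1)$.

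Next, for each such direction $\vec{v}$ I would exhibit two polynomials in $\mathcal{F}_{\vec{v}}(g)$ with no common factor and invoke Theorem~\ref{theorem1}. The convenient observation is that the description of $D$ above is unchanged if we substitute $x-y$ for $y$, i.e.\ it is preserved by the automorphism $\rho\colon(x,y)\mapsto(x-y,x)$ of $\Z^2$, which fixes the origin and cyclically permutes the directions $(1,0)$, $(1,1)$, $(0,1)$; hence $\mathcal{F}_{(1,0)}(g)=\mathcal{F}_{(1,1)}(g)=\mathcal{F}_{(0,1)}(g)$ and it is enough to treat the rows of $D$. For fixed $y = k$ with $|k|\leq r$, the row $D\cap(\Z\times\{k\})$ is the interval of $x$ with $\max(-r,k-r)\leq x\leq\min(r,k+r)$, of size $2r-|k|+1$, and only the central row $k=0$ receives the $-(b-a)$ modification. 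Taking $k=r$ and $k=r-1$ --- here the hypothesis $r\geq 2$ guarantees $k=r-1\geq 1$, so the second row is unmodified --- yields $\phi_1 = 1+t+\ldots+t^r$ and $\phi_2 = 1+t+\ldots+t^{r+1}$ in $\mathcal{F}_{\vec{v}}(g)$. Since $\phi_2-\phi_1 = t^{r+1}$ and $\phi_1(0)=1\neq 0$, the polynomials $\phi_1,\phi_2$ have no common factor, so by Theorem~\ref{theorem1} $g$ has no line polynomial factor in direction $\vec{v}$. As this covers all candidate directions, $g$ has no line polynomial factors, and $c$ is two-periodic by Theorem~\ref{theorem on line polynomial factors}.

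The only real work --- and the only place the hypothesis $r\geq 2$ enters --- is the bookkeeping in the middle step: confirming the distance formula (hence the hexagonal shape of $D$) and checking that the rows of widths $r+1$ and $r+2$ are present and that the width-$(r+2)$ row misses the origin. For $r=1$ that second row is the central one, which is exactly why Theorem~\ref{triangulargrid1} needs the extra assumption $b-a\neq -1$; I do not expect any other subtlety, since once the two fibers are identified the algebra is the same one-line computation as in Theorems~\ref{squaregrid2} and~\ref{kinggrid}.
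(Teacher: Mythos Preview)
Your proof is correct and follows essentially the same route as the paper's: show that the periodizer $g=f_D-(b-a)$ has no line polynomial factors by exhibiting, for each outer edge direction $\vec v\in\{(1,0),(0,1),(1,1)\}$, two consecutive ``all-ones'' fibers $\phi_{n}=1+t+\cdots+t^{n}$ and $\phi_{n+1}$ not touching the origin, and then invoke Theorems~\ref{theorem1} and~\ref{theorem on line polynomial factors}. Your version is more explicit---computing the distance formula, the hexagonal shape of $D$, and the specific value $n=r$---and adds the pleasant observation that the order-$3$ symmetry $\rho\colon(x,y)\mapsto(x-y,x)$ of $D$ reduces the three directions to one; the paper simply asserts the existence of such an $n$ for each $\vec v$ without these details.
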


\subsection{General convex neighborhoods}

A shape $D \subseteq \Z^2$ is \emph{convex} if it is the intersection $D = \conv(D) \cap \Z^2$ where $\conv(D) \subseteq \R^2$ is the real convex hull of $D$.

Let $D \subseteq \Z^2$ be a finite convex shape.
Any $(D,b,a)$-covering has a periodizer $g = f_D - (b-a)$.
As earlier, we study whether $g$ has any line polynomial factors.
For any $\vec{v} \neq 0$ the set $\mathcal{F}_{\vec{v}}(f_D)$ contains only polynomials $\phi_n = 1 + \ldots + t^{n-1}$ for different $n \geq 1$ since $D$ is convex: if $D$ contains two points then $D$ contains every point between them.
Thus $\mathcal{F}_{\vec{v}}(g)$ contains only polynomials $\phi_n$ for different $n \geq 1$ and, if $b-a \neq 0$, also a polynomial $\phi_{n_0} - (b-a)t^{m_0}$ for some $n_0 \geq 1$ such that $\phi_{n_0} \in \mathcal{F}_{\vec{v}}(f_D)$ and for some $m_0 \geq 0$.
If $b-a=0$ then $g=f_D$ and thus $\mathcal{F}_{\vec{v}}(g) = \mathcal{F}_{\vec{v}}(f_D)$.

Two polynomials $\phi_m$ and $\phi_n$ have a common factor if and only if $\gcd(m,n) > 1$.
More generally, the polynomials $\phi_{n_1}, \ldots, \phi_{n_r}$ have a common factor if and only if $d = \gcd(n_1,\ldots,n_r) > 1$ and, in fact, their greatest common factor is the $d$th \emph{cyclotomic polynomial}
$$
\prod_{\substack{1 \leq k \leq d \\ \gcd(k,d) = 1}}(t - e^{i \cdot \frac{2 \pi k}{d}}).
$$

Let us introduce the following notation. For any polynomial $f$, we
denote by $\mathcal{F}'_{\vec{v}}(f)$ the set of normal forms of the
non-zero fibers $\sum_{k \in \Z} f_{\vec{u} + k \vec{v}} X^{\vec{u} + k \vec{v}}$ for all $\vec{u}\not\in\Z\vec{v}$.
In other words, we exclude the fiber through the origin. Let us also denote $\fib{\vec{v}}{f}$ for the
normal form of the fiber $\sum_{k \in \Z} f_{k \vec{v}} X^{k \vec{v}}$ through the origin. 
We have
$\mathcal{F}_{\vec{v}}(f)=\mathcal{F}'_{\vec{v}}(f)\cup\{\fib{\vec{v}}{f}\}$ if $\fib{\vec{v}}{f} \neq 0$ and $\mathcal{F}_{\vec{v}}(f)=\mathcal{F}'_{\vec{v}}(f)$ if $\fib{\vec{v}}{f} = 0$.

Applying Theorems \ref{theorem on line polynomial factors} and \ref{theorem1} we have the following theorem that gives sufficient conditions for every $(D,b,a)$-covering to be periodic for a finite and convex $D$.
The first part of the theorem was also mentioned in \cite{geravker-puzynina} in a more general form.

\begin{theorem}
	Let $D$ be a finite convex shape, $g = f_D - (b-a)$
    and let $E$ be the set of the outer edge directions of $g$.
    \begin{itemize}
    		\item Assume that $b-a = 0$. For any $\vec{v} \in E$ denote $d_\vec{v}=\gcd(n_1,\ldots,n_r)$ where
    $\mathcal{F}_{\vec{v}}(g) = \{ \phi_{n_1},\ldots,\phi_{n_r}\}$. If $d_\vec{v} = 1$ holds for all
    $\vec{v} \in E$ then every $(D,b,a)$-covering is two-periodic. If $d_\vec{v} = 1$ holds for all but some
    parallel $\vec{v} \in E$ then every $(D,b,a)$-covering is periodic.
    		\item Assume that $b-a \neq 0$. For any $\vec{v} \in E$ denote $d_\vec{v}=\gcd(n_1,\ldots,n_r)$ where
    $\mathcal{F}'_{\vec{v}}(g) = \{ \phi_{n_1},\ldots,\phi_{n_r}\}$.
If the $d_\vec{v}$'th cyclotomic polynomial and $\fib{\vec{v}}{g}$ have no common factors for
any $\vec{v} \in E$ then every $(D,b,a)$-covering is two-periodic. If the condition holds for all but some
    parallel $\vec{v} \in E$ then every $(D,b,a)$-covering is periodic. (Note that the condition is satisfied, in particular, if $d_\vec{v} = 1$.)
    \end{itemize}
\end{theorem}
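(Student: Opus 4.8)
\noindent
The plan is to run the same machinery as in the proofs of Theorems~\ref{squaregrid1} and~\ref{squaregrid2}: the chain Theorem~\ref{theorem on line polynomial factors} $\Leftarrow$ Theorem~\ref{theorem1} $\Leftarrow$ Lemma~\ref{lemma1}. Let $c$ be an arbitrary $(D,b,a)$-covering. Then $g=f_D-(b-a)\in\Per(c)$, so by Theorem~\ref{theorem on line polynomial factors} it suffices to locate the directions of the line polynomial factors of $g$: if there are none then $c$ is two-periodic, and if they all point in one common direction then $c$ is periodic in that direction. By Lemma~\ref{lemma1} every such direction lies in $E$, and by Theorem~\ref{theorem1}, for $\vec{v}\in E$ the polynomial $g$ has a line polynomial factor in direction $\vec{v}$ if and only if the polynomials in $\mathcal{F}_{\vec{v}}(g)$ have a common factor. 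So the whole argument reduces to deciding, for each $\vec{v}\in E$, whether $\mathcal{F}_{\vec{v}}(g)$ has a common factor, and to recognising the stated hypotheses as exactly the conditions that forbid this — for every $\vec{v}\in E$ in the ``two-periodic'' conclusion, and for every $\vec{v}\in E$ outside one common direction in the ``periodic'' conclusion.

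First I would handle $b-a=0$, where $g=f_D$. Since $D$ is convex, each non-zero $\vec{v}$-fiber of $f_D$ is supported on a segment of its line and so has normal form $\phi_n=1+t+\dots+t^{n-1}$ for some $n\ge 1$; hence $\mathcal{F}_{\vec{v}}(g)=\{\phi_{n_1},\dots,\phi_{n_r}\}$. By the elementary fact recalled just before the theorem, such a family has a common factor precisely when $d_{\vec{v}}=\gcd(n_1,\dots,n_r)>1$, so $g$ has a line polynomial factor in direction $\vec{v}$ iff $d_{\vec{v}}>1$. Thus if $d_{\vec{v}}=1$ for every $\vec{v}\in E$ then $g$ has no line polynomial factor and $c$ is two-periodic, and if this holds for all but some parallel $\vec{v}\in E$ then all line polynomial factors of $g$ lie in one direction and $c$ is periodic, by Theorem~\ref{theorem on line polynomial factors}.

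Next I would handle $b-a\ne 0$. Here the constant $-(b-a)$ changes $g$ only in the cell $\vec{0}$, so the non-origin $\vec{v}$-fibers of $g$ agree with those of $f_D$ and still have normal forms among $\{\phi_n\}$; thus $\mathcal{F}_{\vec{v}}(g)=\mathcal{F}'_{\vec{v}}(g)\cup\{\fib{\vec{v}}{g}\}$ with $\mathcal{F}'_{\vec{v}}(g)=\{\phi_{n_1},\dots,\phi_{n_r}\}$, the case $\fib{\vec{v}}{g}=0$ reducing to the previous analysis. The key observation is that a non-unit $\psi$ divides every element of $\mathcal{F}_{\vec{v}}(g)$ if and only if it divides the greatest common factor of $\mathcal{F}'_{\vec{v}}(g)$ — which is a unit when $d_{\vec{v}}=1$ and the $d_{\vec{v}}$-th cyclotomic polynomial when $d_{\vec{v}}>1$, as recalled above — and in addition divides $\fib{\vec{v}}{g}$; and since that greatest common factor is squarefree (its roots being distinct roots of unity), such a $\psi$ exists if and only if it shares a factor with $\fib{\vec{v}}{g}$. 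Hence $g$ has a line polynomial factor in direction $\vec{v}$ if and only if $d_{\vec{v}}>1$ and the $d_{\vec{v}}$-th cyclotomic polynomial has a common factor with $\fib{\vec{v}}{g}$, a condition that automatically fails when $d_{\vec{v}}=1$. As before, if the stated hypothesis holds for every $\vec{v}\in E$ then $g$ has no line polynomial factor and $c$ is two-periodic, and if it holds for all but some parallel $\vec{v}\in E$ then $c$ is periodic, by Theorem~\ref{theorem on line polynomial factors}.

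I expect the main obstacle to be the bookkeeping in the second case: verifying that convexity really confines the non-origin fibers to the one-parameter family $\{\phi_n\}$, pinning down the greatest common factor of such a family, and then showing that the common factors of the enlarged set $\mathcal{F}_{\vec{v}}(g)$ are precisely the common divisors of that (squarefree) polynomial and the perturbed origin fiber $\fib{\vec{v}}{g}$. One should also check that nothing here distinguishes $\vec{v}$ from $-\vec{v}$: the $\phi_n$ and the cyclotomic polynomials are palindromic, so reversing the orientation of a fiber does not affect which of them share a factor, and this is what allows ``parallel'' in the statement to be read with either orientation. Everything else is a direct invocation of Lemma~\ref{lemma1}, Theorem~\ref{theorem1} and Theorem~\ref{theorem on line polynomial factors}.
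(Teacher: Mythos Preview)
Your proposal is correct and is exactly the argument the paper intends: the paper does not give a separate proof but states that the theorem follows by ``applying Theorems~\ref{theorem on line polynomial factors} and~\ref{theorem1}'' to the discussion preceding it, and your write-up is precisely that discussion unpacked --- convexity forces the non-origin fibers to be of the form $\phi_n$, the common-factor question for $\mathcal{F}_{\vec v}(g)$ reduces to the gcd of the $\phi_{n_i}$ (and, when $b-a\neq 0$, its common factors with $\fib{\vec v}{g}$), and Lemma~\ref{lemma1} with Theorem~\ref{theorem1} feeds this into Theorem~\ref{theorem on line polynomial factors}. The squarefreeness remark is harmless but unnecessary: the equivalence ``$\mathcal{F}_{\vec v}(g)$ has a common factor $\iff$ $\gcd(\mathcal{F}'_{\vec v}(g))$ and $\fib{\vec v}{g}$ have a common factor'' holds by the definition of gcd alone.
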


\section{Algorithmic aspects}


All coverings are periodic, in particular, if there are no coverings at all!
It is useful to be able to detect such trivial cases.

The set
$$
\mathcal{S}(D,b,a) =
\{ c \in \{0,1\}^{\Z^2} \mid (f_D - (b-a)) c = a \mathbbm{1}(X) \}
$$
of all $(D,b,a)$-coverings is an SFT for any given finite shape $D$ and non-negative integers $b$ and $a$.
Hence the question whether there exist any $(D,b,a)$-coverings for given neighborhood
$D$ and covering constants $b$ and $a$ is equivalent to the question whether
the SFT $\mathcal{S} = \mathcal{S}(D,b,a)$ is non-empty. The question of emptiness
of a given SFT is in general undecidable, but if the SFT is known to be not aperiodic then
the problem becomes decidable.
In particular, if $g = f_D - (b-a)$ has line polynomial factors in at most one direction then
this question is decidable:



\begin{theorem}
\label{thm:effective}
Let finite $D \subseteq \Z^2$ and non-negative integers $b$ and $a$ be given such that
the polynomial $g=f_D - (b-a)$ has line polynomial factors in at most one parallel direction.
Then there exists an algorithm to determine whether there exist any $(D,b,a)$-coverings.
\end{theorem}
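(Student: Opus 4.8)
\medskip
\noindent\emph{Plan of proof.} The set $\mathcal{S}=\mathcal{S}(D,b,a)$ is an SFT, so the plan is to prove that under the hypothesis this SFT is not aperiodic in the strong sense that \emph{if $\mathcal{S}\neq\emptyset$ then $\mathcal{S}$ contains a two-periodic covering}, and then to invoke the standard argument of H.~Wang. Granting this claim, the algorithm runs two semi-procedures in parallel. The first enumerates all pairs $(m,n)$ of positive integers together with all patterns on $\Z/m\Z\times\Z/n\Z$; each such pattern determines a two-periodic configuration $c$, and since the condition of Definition~\ref{maar2} is local it can be checked on $c$ by inspecting the finitely many cells of one fundamental domain; if some $c$ passes, the procedure reports that $(D,b,a)$-coverings exist. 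The second enumerates $n=1,2,\dots$ and tests whether the box $[-n,n]^2$ carries a pattern in which the covering condition holds at every $\vec{u}$ with $\{\vec{u}\}\cup(\vec{u}+D)\subseteq[-n,n]^2$; if for some $n$ there is no such pattern, the procedure reports that no $(D,b,a)$-coverings exist. The first halts exactly when $\mathcal{S}$ contains a two-periodic covering; the second halts exactly when $\mathcal{S}=\emptyset$, by compactness of $\{0,1\}^{\Z^2}$ (any configuration in $\mathcal{S}$ restricts to a valid box pattern for every $n$, and conversely valid box patterns for all $n$ yield, in the limit, a configuration in $\mathcal{S}$). By the claim exactly one of the two procedures halts, which decides the question.

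It remains to prove the claim. Let $c\in\mathcal{S}$. As observed just before the theorem, $g\in\Per(c)$. If $g$ has no line polynomial factors, then $c$ is two-periodic by Theorem~\ref{theorem on line polynomial factors} and we are done. Otherwise all line polynomial factors of $g$ lie, by hypothesis, in one primitive direction $\vec{v}$, and Theorem~\ref{theorem on line polynomial factors} shows that $c$ is periodic in direction $\vec{v}$; since $\vec{v}$ is primitive, $c$ is $p\vec{v}$-periodic for some positive integer $p$. Complete $\vec{v}$ to a basis $(\vec{v},\vec{w})$ of $\Z^2$; under the associated unimodular change of coordinates, $\mathcal{S}$ becomes an SFT $\mathcal{S}'$ and $c$ becomes a configuration of period $p$ in the first coordinate. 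The configurations of $\mathcal{S}'$ having period $p$ in the first coordinate form, via the obvious period-respecting encoding (a configuration is the bi-infinite sequence, indexed by the second coordinate, of its width-$p$ rows), a one-dimensional SFT $Y$ over the alphabet $\A^{p}$; finiteness of $D$ guarantees that the finitely many forbidden patterns of $\mathcal{S}'$ translate into finitely many forbidden words of $Y$. Since $Y\neq\emptyset$ it contains a periodic configuration, because every non-empty one-dimensional SFT does. Decoding such a configuration gives a configuration of $\mathcal{S}'$ that is periodic in both coordinate directions, hence two-periodic, and transporting it back along the change of coordinates yields a two-periodic covering in $\mathcal{S}$, as claimed.

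The heart of the argument, and the place where the hypothesis on $g$ is really used, is the second case of the claim: Theorem~\ref{theorem on line polynomial factors} forces every covering to be periodic in one fixed direction, and then collapsing that direction produces a one-dimensional SFT whose non-emptiness delivers a periodic, and hence two-periodic, covering. The remaining ingredients --- that $\mathcal{S}$ is an SFT, that $g\in\Per(c)$, and the routine verification of the two semi-procedures --- are standard.
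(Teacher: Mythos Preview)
Your proof is correct and follows the same route as the paper's: Theorem~\ref{theorem on line polynomial factors} forces every covering to be periodic, a non-empty two-dimensional SFT containing a periodic configuration must then contain a two-periodic one, and Wang's two semi-algorithms run in parallel decide non-emptiness. Where the paper simply cites the middle fact about SFTs and Wang's argument, you supply the details (the change of coordinates and one-dimensional reduction, and the explicit semi-procedures), but the architecture is identical.
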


\begin{proof}	
Let $\mathcal{S}=\mathcal{S}(D,b,a)$ be the SFT of all $(D,b,a)$-coverings.
Since $g$ has line polynomial factors in at most one direction, by Theorem \ref{theorem on line polynomial factors}
every element of $\mathcal{S}$ is periodic.
Any two-dimensional SFT that contains periodic configurations contains also two-periodic configurations, so
$\mathcal{S}$ is either empty or contains a two-periodic configuration. By a standard argumentation by H. Wang \cite{wang}
there exist semi-algorithms to determine whether a given SFT is empty and whether a given SFT contains a two-periodic configuration.	Running these two semi-algorithms in parallel gives us an algorithm to test whether $\mathcal{S} \neq \emptyset$.
\end{proof}

\noindent
One may also want to design a perfect $(D,b,a)$-covering for given $D$, $b$ and $a$.
This can be effectively done under the assumptions of Theorem~\ref{thm:effective}:
As we have seen, if
$\mathcal{S}=\mathcal{S}(D,b,a)$ is non-empty it contains a two-periodic configuration. For any two-periodic
configuration $c$ it is easy to check if $c$ contains a forbidden pattern.
By enumerating two-periodic configurations one-by-one one is guaranteed to find eventually one that is in $\mathcal{S}$.

If the polynomial $g$ has no line polynomial factors then the following stronger result holds:

\begin{theorem} \label{effective}
	If the polynomial $g=f_D - (b-a)$ has no line polynomial factors
	for given finite shape $D \subseteq \Z^2$ and non-negative integers $b$ and $a$	
	 then the SFT $\mathcal{S} = \mathcal{S}(D,b,a)$ is finite. One can then effectively construct
all the finitely many elements of $\mathcal{S}$.
\end{theorem}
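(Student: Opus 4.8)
The plan is to strengthen the conclusion ``every $(D,b,a)$-covering is two-periodic'' into ``every $(D,b,a)$-covering is $(N_0,0)$- and $(0,N_0)$-periodic'' for a single integer $N_0$ that depends only on $g$, after which $\mathcal{S}$ is forced to be a subset of the finite set of configurations over $(\Z/N_0\Z)^2$.

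First, since $g\in\Per(c)$ has no line polynomial factors, Theorem~\ref{theorem on line polynomial factors} shows that every $c\in\mathcal{S}$ is two-periodic. Fix such a $c$ and let $L$ be a period lattice of $c$ of finite index in $\Z^2$; the constant series $a\mathbbm{1}(X)$ is also $L$-periodic, so the whole identity $gc = a\mathbbm{1}(X)$ lies in the space of $L$-periodic power series, which has a finite Fourier decomposition over the group $\Z^2/L$. Each character $\chi$ of $\Z^2/L$, written as the power series $\sum_{\vec u}\zeta^{u_1}\xi^{u_2}X^{\vec u}$ with $(\zeta,\xi)$ a suitable pair of roots of unity, is an eigenvector for multiplication by any polynomial: $g\chi$ equals $\chi$ times the value of $g$ at the point $(\zeta^{\pm 1},\xi^{\pm1})$, and in particular $\mathbbm{1}(X)$ is the trivial character with $g\mathbbm{1}(X)=g(1,1)\mathbbm{1}(X)$. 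Comparing Fourier coefficients on the two sides of $gc=a\mathbbm{1}(X)$ shows that $\hat c(\chi)=0$ whenever $\chi$ is non-trivial and the corresponding root-of-unity point does not lie on the curve $V(g)=\{(\alpha,\beta)\in(\C^\ast)^2\mid g(\alpha,\beta)=0\}$. Hence the Fourier support of every $c\in\mathcal{S}$ is contained in $\{(1,1)\}$ together with the set $R$ of \emph{torsion points} of $V(g)$, i.e.\ the points of $V(g)$ both of whose coordinates are roots of unity; this set $R$ does not depend on $c$.

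The heart of the argument -- and the step I expect to be the main obstacle -- is the claim that, because $g$ has integer coefficients and no line polynomial factors, $R$ is finite and the orders of the roots of unity occurring in it are bounded by an effectively computable $N_0$. The hypothesis enters because $V(g)$ contains an entire coset of a subtorus exactly when $g$ has a binomial factor $X^{\vec v}-\rho$, which is a line polynomial; with no line polynomial factor present, no component of $V(g)$ is such a coset, so finiteness of $R$ is precisely Laurent's theorem on torsion points of subvarieties of $(\C^\ast)^2$. For the effective bound I would argue elementarily via vanishing sums of roots of unity: evaluating $g$ at a torsion point $(\zeta,\xi)$ yields an integer relation $\sum_{\vec u\in D}\zeta^{\pm u_1}\xi^{\pm u_2}=b-a$ among at most $|D|+|b-a|$ roots of unity, and Mann's theorem (or the Conway--Jones description of minimal vanishing sums) bounds the orders of the monomials $\zeta^{u_1}\xi^{u_2}$ occurring; since $g$ has no line polynomial factor the shape $D$ is not contained in a line, so those monomials pin down $\zeta$ and $\xi$ up to a bounded-index ambiguity, giving the bound $N_0$.

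Granting this, every $c\in\mathcal{S}$ has Fourier support consisting only of characters of order dividing $N_0$, hence $c$ itself is $(N_0,0)$- and $(0,N_0)$-periodic, so $\mathcal{S}$ embeds into the finite set of such periodic configurations and is therefore finite. For the effective construction one computes $N_0$ as above, enumerates the finitely many configurations in $\{0,1\}^{\Z^2}$ that are $(N_0,0)$- and $(0,N_0)$-periodic, and for each one checks -- a finite computation on a single period -- whether $(f_D-(b-a))c=a\mathbbm{1}(X)$; the configurations that pass are exactly the elements of $\mathcal{S}$.
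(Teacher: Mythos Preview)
Your approach is correct and takes a genuinely different route from the paper's. The paper obtains finiteness by combining Theorem~\ref{theorem on line polynomial factors} with a symbolic-dynamics fact it cites: a two-dimensional subshift all of whose configurations are two-periodic is automatically finite. Effectiveness then comes from a general enumeration argument valid for \emph{any} finite SFT, carried out in the appendix: one enumerates finite translation-invariant sets $P$ of strongly periodic configurations and tests each for equality with $\mathcal{S}$ using matching semi-algorithms. You instead pass through Fourier analysis on $\Z^2/L$ and reduce finiteness of $\mathcal{S}$ to finiteness of the set of torsion points on the curve $V(g)\subset(\C^*)^2$, which is Laurent's theorem once you observe that a positive-dimensional torsion coset inside $V(g)$ would force a binomial (hence line-polynomial) factor of $g$. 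Your route yields the quantitatively sharper conclusion that a single period $N_0$, computable from $g$ alone, works for every covering, and gives a very direct algorithm; the paper's route stays entirely within the symbolic-dynamics toolkit already set up and needs no Diophantine input.

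One caution on the effectivity step you yourself flag as the main obstacle: Mann's theorem and the Conway--Jones classification bound only the orders of ratios \emph{within} a minimal vanishing subsum, and the relation $g(\zeta,\xi)=0$ may split into several such subsums with no control across them, so this by itself does not bound the orders of $\zeta$ and $\xi$ individually. The effective statement you actually need---that the torsion points on a curve in $(\C^*)^2$ defined over $\Z$ with no torsion-coset component have effectively bounded order---is nonetheless a known theorem (for instance Ruppert, or Beukers--Smyth on cyclotomic points on curves); citing such a result directly closes the gap cleanly.
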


\noindent
The proof of the first part of above theorem relies on the fact that
a two-dimensional subshift is finite if and only if it contains only two-periodic cofigurations~\cite{ballier}.
If $g$ has no line polynomial factors then every configuration it periodizes (including every configuration in $\mathcal{S}$) is two-periodic by Theorem \ref{theorem on line polynomial factors}, and hence $\mathcal{S}$ is finite.
The ``moreover'' part of the theorem, {\it i.e.},  the fact that one can
effectively produce all the finitely many elements of $\mathcal{S}$
holds generally for finite  SFTs. 
(The proof is provided in the Appendix for the sake of completeness.)





\section*{References}

\bibliographystyle{plain}
\bibliography{Biblio}

\newpage
\section*{Appendix}

\subsection*{Proofs of Theorems \ref{triangulargrid1} and \ref{triangulargrid2}}

\begin{manualtheorem}{\ref{triangulargrid1}.}
    Let $D$ be the 1-neighborhood of $\vec{0}$ in the triangular grid and assume that
    $b-a \neq -1$.
    Then every $(D,b,a)$-covering in the triangular grid is two-periodic.
    In other words, all $(1,b,a)$-coverings in the triangular grid are two-periodic whenever $b-a \neq -1$.
\end{manualtheorem}

\begin{proof}
Let $c$ be an arbitrary $(D,b,a)$-covering.
    Once again, we show that $g = f_D -(b-a) = x^{-1}y^{-1} + x^{-1} + y^{-1} + 1-(b-a) + x + y + xy$ has no line polynomial factors, so that by Theorem \ref{theorem on line polynomial factors} the configuration $c$ is two-periodic.
    The outer edges of $g$ have directions
    $(1,1),(-1,-1),(1,0),(-1,0),(0,1)$ and $(0,-1)$
    and hence by Lemma~\ref{lemma1} any line polynomial factor of $g$ has direction $(1,1)$, $(1,0)$ or $(0,1)$.
    So, let $\vec{v} \in \{ (1,1), (1,0),(0,1) \}$.
    We have
    $\mathcal{F}_{\vec{v}}(g) = \{ 1+t, 1+ (1 - (b-a))t + t^2 \}$.
    See Figure \ref{Illustrations2} for an illustration.
    Polynomials $\phi_1=1+t$ and $\phi_2=1+ (1 - (b-a))t + t^2$ satisfy $\phi_1^2-\phi_2= (1+b-a)t$ so that
    they do not have any common factors if $b-a \neq -1$.
    Thus
    $g$ has no line polynomial factors by Theorem \ref{theorem1}.
\end{proof}

\bigskip

\begin{manualtheorem}{ \ref{triangulargrid2}}
    Let $r \geq 2$ and let $D$ be the $r$-neighborhood of $\vec{0}$ in the triangular grid.
    Then every $(D,b,a)$-covering is two-periodic.
    In other words, all $(r,b,a)$-coverings in the triangular grid are two-periodic for $r \geq 2$.
\end{manualtheorem}

\begin{proof}
Let $c$ be an arbitrary $(D,b,a)$-covering.
    We show that $g = f_D -(b-a) \in \Per(c)$ has no line polynomial factors, which by Theorem \ref{theorem on line polynomial factors} implies that the configuration $c$ is two-periodic.
    The outer edges
    of $g$ have directions
    $(1,1)$, $(-1,-1)$, $(1,0)$, $(-1,0)$, $(0,1)$ and $(0,-1)$,
    and hence by Lemma~\ref{lemma1} any line polynomial factor of $g$
    has direction $(1,1)$, $(1,0)$ or $(0,1)$.
    So, let $\vec{v} \in \{ (1,1), (1,0),(0,1) \}$.
    There exists $n \geq 1$ such that $1+t+ \ldots + t^n \in \mathcal{F}_{\vec{v}}(g)$ and $1+t+ \ldots + t^{n+1} \in \mathcal{F}_{\vec{v}}(g)$.
    See Figure \ref{Illustrations2} for an illustration with $r=2$.
    Since these two polynomials have no common factors
    $g$ has no line polynomial factors by Theorem \ref{theorem1}. 
\end{proof}

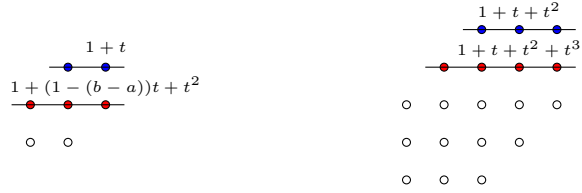
\begin{figure}
	\centering
	\begin{tikzpicture}[scale=0.5]
		\draw[fill=red] (0,-6) circle(3pt);
		\draw[fill=red] (-1,-6) circle(3pt);
		\draw[] (0,-7) circle(3pt);
		\draw[fill=red] (1,-6) circle(3pt);
		\draw[fill=blue] (0,-5) circle(3pt);
		\draw[fill=blue] (1,-5) circle(3pt);
		\draw[] (-1,-7) circle(3pt);
	
		\draw (-0.5,-5) -- (1.5,-5);
		\draw (-1.5,-6) -- (1.5,-6);
		\node at (1,-4.5) {\tiny $1+t$};
		\node at (1,-5.5) {\tiny $1+(1-(b-a))t+t^2$};
	
	
		\draw[] (11,-6) circle(3pt);
		\draw[] (10,-6) circle(3pt);
		\draw[] (11,-7) circle(3pt);
		\draw[] (12,-6) circle(3pt);
		\draw[fill=red] (11,-5) circle(3pt);
		\draw[fill=blue] (11,-4) circle(3pt);
		\draw[] (11,-8) circle(3pt);
		\draw[fill=red] (12,-5) circle(3pt);
		\draw[] (12,-7) circle(3pt);
		\draw[fill=red] (10,-5) circle(3pt);
		\draw[] (10,-7) circle(3pt);
		\draw[] (9,-6) circle(3pt);
		\draw[] (13,-6) circle(3pt);
		\draw[] (9,-7) circle(3pt);
		\draw[] (9,-8) circle(3pt);
		\draw[] (10,-8) circle(3pt);
		\draw[fill=blue] (12,-4) circle(3pt);
		\draw[fill=blue] (13,-4) circle(3pt);
		\draw[fill=red] (13,-5) circle(3pt);
		
		\draw (10.5,-4) -- (13.5,-4);
		\draw (9.5,-5) -- (13.5,-5);
		
		\node at (12,-3.5) {\tiny $1+t+t^2$};
		\node at (12,-4.5) {\tiny $1+t+t^2+t^3$};

	\end{tikzpicture}
	\caption{The constellation on the left illustrates the proof of Theorem \ref{triangulargrid1} and the constellation on the right illustrates the proof of Theorem \ref{triangulargrid2} with $r=2$.}
    \label{Illustrations2}
\end{figure}

\subsection*{An alternative proof of Theorem~\ref{theorem on line polynomial factors}}

\begin{manualtheorem}{\ref{theorem on line polynomial factors}.}
    Let $c$ be a two-dimensional configuration and $f \in \Per(c)$. Then the following conditions hold.
    \begin{itemize}
        \item If $f$ does not have any line polynomial factors then $c$ is two-periodic.
        \item If all line polynomial factors of $f$ are in the same direction then $c$ is periodic in this direction.
    \end{itemize}
\end{manualtheorem}

\noindent
\emph{Second proof sketch.}
The existence of a non-trivial periodizer $f$ implies
by Theorem~\ref{special annihilator} that $c$ has a special annihilator
$g=\phi_1 \cdots \phi_m$ that is a product of (difference) line polynomials $\phi_1, \ldots ,\phi_m$ in pairwise different
directions.
All irreducible factors of $g$ are line polynomials. If $f$ does not have any line polynomial factors then
the periodizers $f$ and $g$ do not have common factors.
We can assume that both are proper polynomials as
they can be multiplied by a monomial if needed. The \emph{$x$-resultant} of $f,g\in \C[x,y]$
is a polynomial $\Res_x(f,g)=\alpha f+\beta g$ for some $\alpha,\beta\in \C[x,y]$ such that the variable $x$ is
eliminated, {\it i.e.},
$\Res_x(f,g)$ is a polynomial in variable $y$ only. Moreover, since $f$ and $g$ do not have common factors,
$\Res_x(f,g)$ is not identically zero. Because $f,g\in\Per(c)$ also $\Res_x(f,g)\in \Per(c)$, implying that
$c$ has a non-trivial annihilator containing only variable $y$. This means that $c$ is periodic in the vertical direction.
Analogously, the \emph{$y$-resultant} $\Res_y(f,g)$ shows that $c$ is horizontally periodic, and hence two-periodic.

The proof for the case that $f$ has line polynomial factors only in one direction $\vec{v}$
goes analogously by considering $\phi c$
instead of $c$, where $\phi$ is the greatest common line polynomial factor of $f$ and $g$ in the direction $\vec{v}$.
We get that $\phi c$ is two-periodic, implying that $c$ is periodic in the direction $\vec{v}$.
\qed

\subsection*{An algorithm to find all elements of a given finite SFT}

\begin{theorem}
Given a finite $F \subseteq \A^*$ such that $X_F$ is finite, one can effectively construct the elements of $X_F$.
\end{theorem}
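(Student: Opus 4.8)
\emph{Proof plan.} The plan is to run two semi-algorithms in parallel: one builds an ever-growing set $L$ of elements of $X_F$, and the other certifies, when possible, that $L$ is already all of $X_F$. First note that, since $X_F$ is a \emph{finite} subshift, the orbit of any $c\in X_F$ is contained in $X_F$ and hence finite; thus the translation stabilizer $\{\vec{t}\in\Z^2\mid\tau^{\vec{t}}(c)=c\}$ has finite index in $\Z^2$ and therefore contains $n\Z^2$ for some $n\geq 1$, so $c$ is two-periodic. Hence each element of $X_F$ is specified by a finite amount of data (two period vectors together with the coloring of a fundamental domain), so the statement makes sense, and it suffices to search among two-periodic configurations. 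I would enumerate all two-periodic configurations one at a time and, for each, decide whether it avoids $F$; periodicity makes this test decidable, as it reduces to finitely many placements of the patterns in $F$. Keeping the valid ones, and closing the set under translation, yields a non-decreasing sequence $L_1\subseteq L_2\subseteq\cdots$ of finite shift-invariant subsets of $X_F$ whose union is $X_F$. The only remaining issue is to recognize a stage at which $L_t=X_F$.

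For the halting criterion, note that from any finite shift-invariant set $L$ of two-periodic valid configurations one can compute a set $F_L$ of forbidden patterns with $X_{F_L}=L$: choose a positive integer $p$ such that each element of $L$ is invariant under $\tau^{p\vec{e}_1}$ and $\tau^{p\vec{e}_2}$, set $D=\{0,\dots,p\}^2$, and let $F_L$ be the set of $D$-patterns that occur in no element of $L$. Indeed, if $c\in X_{F_L}$ then every $D$-pattern of $c$ occurs in some element of $L$; since $D$ contains cells differing by $p\vec{e}_1$ and by $p\vec{e}_2$, this forces $c$ to be invariant under $\tau^{p\vec{e}_1}$ and $\tau^{p\vec{e}_2}$, and then the block of $c$ on $\{0,\dots,p-1\}^2$ (which occurs in some element of $L$) determines $c$, so $c\in L$; the converse inclusion is immediate. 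Since $L\subseteq X_F$ holds at every stage, $X_F=L$ is equivalent to $X_F\subseteq X_{F_L}$, and this says precisely that no $F$-avoiding configuration contains any of the finitely many patterns of $F_L$.

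The last point is that ``no $F$-avoiding configuration contains the pattern $q$'' is semi-decidable. By a compactness argument (K\"onig's lemma), there is an $F$-avoiding configuration containing $q$ if and only if for every $N$ there is an $F$-avoiding pattern on $[-N,N]^2$ having an occurrence of $q$ at the origin; hence the negation is witnessed by a single $N$ for which no such finite pattern exists, which is a decidable property of $N$. Running this search over all $q\in F_L$ gives a semi-algorithm that halts exactly when $X_F\subseteq L$. I would therefore dovetail everything: maintain the current $L$, and whenever it grows (re)start the above check for it; when the check succeeds, output $L$ and stop. This terminates: by the first paragraph, $L=X_F$ after finitely many stages, and then $F_L$ is a set of forbidden patterns with $X_{F_L}=X_F$, so every $F$-avoiding configuration avoids $F_L$, and each of the finitely many searches halts (the relevant $N$ existing by the same compactness argument). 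At termination $L\subseteq X_F\subseteq L$, so the output is exactly $X_F$. If $X_F=\varnothing$, the procedure runs with $L=\varnothing$ and $F_L$ taken to be any set of forbidden patterns defining the empty subshift, for instance all one-cell patterns.

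The step I expect to be the main obstacle is the completeness certification, that is, turning ``$L$ already contains every element of $X_F$'' into something a machine can confirm. The naive attempt --- wait until every $F$-avoiding finite pattern appears as a subpattern of some configuration in $L$ --- fails, because an SFT can contain arbitrarily large ``dead-end'' patterns that avoid $F$ yet extend to no configuration at all, so that condition may never hold even when $L=X_F$. Replacing it by the $F_L$-formulation above, and semi-deciding via compactness that the patterns of $F_L$ never occur in an $F$-avoiding configuration, is what makes the argument go through. The remaining ingredients --- two-periodicity of the elements of a finite subshift, decidability of the $F$-avoidance test, and the construction of $F_L$ --- are routine.
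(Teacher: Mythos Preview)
Your proposal is correct and follows essentially the same approach as the paper: both arguments observe that every element of a finite SFT is two-periodic, enumerate candidate finite shift-invariant sets of two-periodic configurations, represent each candidate as an SFT by an explicit finite set of forbidden patterns, and certify that a candidate equals $X_F$ using the compactness semi-algorithm for ``pattern $q$ does not occur in any $F$-avoiding configuration.''

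The only differences are organizational. The paper first packages the two semi-algorithms into a decision procedure for $p\in\Lang{X_F}$ (and hence for $X_F=X_G$) and then enumerates \emph{all} finite translation-invariant sets $P$, testing each; you instead grow a single set $L\subseteq X_F$ and only need to semi-decide the one-sided inclusion $X_F\subseteq X_{F_L}$, since $L\subseteq X_F$ is maintained by construction. You are also more explicit than the paper about how to produce the defining set $F_L$ (the paper just asserts ``one can construct a finite set $G\subseteq\A^*$ of forbidden patterns such that $X_G=P$''), and your verification that $X_{F_L}=L$ via the $(p{+}1)\times(p{+}1)$ window is correct. Your discussion of why the naive halting criterion fails and why the $F_L$-formulation fixes it is exactly the point of the construction.
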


\begin{proof}	
Given a finite $F \subseteq \A^*$ and a pattern $p\in \A^D$, assuming that strongly periodic configurations are dense in $X_F$,
one can effectively check whether $p\in \Lang{X_F}$. Indeed, we have a semi-algorithm for the positive instances that
guesses a strongly periodic configuration $c$ and verifies that $c\in X_F$ and $p\in\Lang{c}$. A semi-algorithm for the
negative instances exists for any SFT $X_F$ and is a standard compactness argument: guess a finite $E\subseteq \Z^d$ such that
$D\subseteq E$ and verify that every $q\in \A^E$ such that $q|_D=p$ contains a forbidden subpattern.

Consequently, given finite $F,G \subseteq \A^*$,  assuming that strongly periodic configurations are dense in $X_F$ and $X_G$,
one can effectively determine whether $X_F=X_G$. Indeed, $X_F\subseteq X_G$ if and only if no $p\in G$ is in $\Lang{X_F}$, a
condition that we have shown above to be decidable. Analogously we can test $X_G\subseteq X_F$.

Finally, let a finite $F \subseteq \A^*$ be given such that $X_F$ is known to be finite. All elements of $X_F$ are strongly periodic so that strongly periodic configurations are certainly dense in $X_F$. One can effectively enumerate
all finite sets $P$ of strongly periodic configurations. For each $P$ that is translation invariant
(and hence a finite SFT) one can construct a finite set $G\subseteq \A^*$ of forbidden patterns such that $X_G=P$.
As shown above, there is an algorithm to test
whether $X_F=X_G=P$. Since $X_F$ is finite, a set $P$ is eventually found such that $X_F=P$.
\end{proof}

\end{document}